\newcommand{\C}{{\mathbb C}}
\newcommand{\R}{{\mathbb R}}
\newcommand{\Z}{{\mathbb Z}}
\newcommand{\N}{{\mathbb N}}
\newcommand{\F}{{\mathbb F}}
\newtheorem{theorem}{Theorem}[section]
\newtheorem{lemma}[theorem]{Lemma}
\newtheorem{remark}[theorem]{Remark}
\newtheorem{example}[theorem]{Example}
\newtheorem{corollary}[theorem]{Corollary}
\newtheorem{proposition}[theorem]{Proposition}
\newtheorem{definition}[theorem]{Definition}
\newtheorem{examples}[theorem]{Examples}
\def\cal{\mathcal}
\newcommand{\calr}[0]{{\cal R}}
\newcommand{\calg}[0]{{\cal G}}
\newcommand{\cale}[0]{{\cal E}}
\newcommand{\calk}[0]{{\cal K}}
\newcommand{\calo}[0]{{\cal O}}
\newcommand{\cals}[0]{{\cal S}}
\newcommand{\calm}[0]{{\cal M}}
\newcommand{\calu}[0]{{\cal U}}
\newcommand{\calt}[0]{{\cal T}}
\begin{document}

\title{Some remarks in $C^*$- and $K$-theory}
\author[B. Burgstaller]{Bernhard Burgstaller}
\email{bernhardburgstaller@yahoo.de}
\subjclass{46L05, 19K35, 20M18}
\keywords{$C^*$-algebra, general linear group, homomorphism, $KK$-theory, generators, universal property, inverse semigroup, Hausdorff, $L^2$}

\begin{abstract}
This note consists of three unrelated remarks.
First, we demonstrate
how roughly speaking $*$-homomorphisms between
matrix stable $C^*$-algebras are exactly the
uniformly continuous $*$-preserving group homomorphisms between their
genral linear groups.
Second, using the Cuntz picture in $KK$-theory we bring
morphisms in $KK$-theory represented by generators and relations
to a particular simple form.
%
%
%
Third, we show that for an inverse semigroup
%
its associated groupoid is Hausdorff if and only if the
inverse semigroup is $E$-continuous.
\end{abstract}

\maketitle



\section{Introduction}

In this note we present three unrelated results in $C^*$-theory and $K$-theory.
%
The first result is demonstrated in Section \ref{section2} and shows that for all unital $C^*$-algebras $A$ and $B$, every uniformly continuous, $*$-preserving group homomorphism
$\varphi:GL(A \otimes M_2) \rightarrow GL(B)$ can be extended to a $*$-homomorphism $A \otimes M_2 \rightarrow B$,
provided a very light additional technical condition for the restriction
of $\varphi$ to the complex numbers 
is satisfied, see Corollary \ref{corollary22} and Section \ref{section2}.
Actually, we have demonstrated a similar result already in \cite{burgmatrix}, but the improvement,
thanks to some trick by L. Moln\'ar \cite{molnar},
is that the additional technical condition is here subjectively somewhat easier,
even if not strictly logically comparable with the one in \cite{burgmatrix}.

In the next Section \ref{section3}, we make a turn to $KK$-theory \cite{kasparov1988}. J. Cuntz
\cite{cuntz1984} and N. Higson \cite{higson} found out that Kasparov's $KK$-theory
is the universal stable, homotopy invariant, split-exact functor from the $C^*$-category
to an additive category.
This makes it possible to describe $KK$-theory as a localization of the category of $C^*$-algebras, or expressed in less technical terms, by adding certain synthetical inverses
to the category of $C^*$-algebras and moding out certain relations to form $KK$-theory.
We slightly simplify the representation of $KK$-elements in this picture at first, but make the most dramatical simplification
by using the Cuntz-picture \cite{cuntz1984,cuntzpicture} of $KK$-theory elements.
This picture of $KK$-theory may also be analogously and readily defined equivariantly for other equivariant structures
than groups, say semigroups, categories and so on, and even the category of $C^*$-algebras may be changed to other
(topological) algebras.

In the last Section \ref{section4} 
we observe
that a discrete inverse semigroup induces a {Hausdorff} groupoid if and only if the inverse semigroup
is {$E$-continuous}.
We also note that both equivalent technical conditions appear necessary to define a non-degenerate, $C_0(X)$-compatible $C_0(X)$-valued $L^2(G)$-module, see Definition \ref{definitionEcontinuous} and Example \ref{example1} for more on this.
Such a module is a 
useful tool for the computation of the $K$-theory of inverse semigroup crossed products.
However, the lack of such a module in the non-Hausdorff case hinders the computation
of beformentioned $K$-theory groups of crossed products by non-applicability of parallel
methods successful in the group case. The difficulty of computation
has been already observed by Tu \cite{tunonhausdorffgroupoid} for the more
general setting of non-Hausdorff groupoids in the context of Baum--Connes theory.

All chapters in this note can be read
completely independently.

\section{Group and algebra homomoprhisms}	\label{section2}

In this section we show how certain group homomorphisms between the group of invertible elements of $C^*$-algebras
can be extended to $*$-homomorphisms.
A map $\varphi:A \rightarrow B$ between $C^*$-algebras $A$ and $B$ is called a {\em $*$-semigroup homomorphism} if it is multiplicative (i.e. $\varphi(ab)=\varphi(a)\varphi(b)$) and {\em $*$-preserving}
(i.e. $\varphi(a^*)=\varphi(a)^*$). As usual, $M_n$ denotes the $C^*$-algebra of all complex-valued $n \times n$-matrices, and $GL(A)$ the general linear group of $A$.

\if 0
It is clear that every $*$-homomorphism $\psi:A \rightarrow B$ between $C^*$-algebras 
restricts to a uniformly continuous, $*$-preserving group homomorphism $\psi':GL(A) \rightarrow GL(B)$. 
In this section whe show that this implication can be reversed if $A$ is stable.
\fi

\if 0

\begin{proposition}
Let $\varphi:GL(A \otimes M_2) \rightarrow B$ be an arbitrary function where
$A$ and $B$ are unital $C^*$-algebras. 
Then the following are equivalent:

\begin{itemize}
\item
$\varphi$ extends to a $\C$-linear or $\C$-conjugate-linear, unital $*$-homomorphism $A \otimes M_2 \rightarrow B$.

\item
$\varphi$ is a uniformly continuous, $*$-preserving group homomorphism

 $*$-semigroup homomorphism.

\item
$\varphi:GL(A \otimes M_2) \rightarrow GL(B)$ is a uniformly continuous $*$-group homomorphism.
\end{itemize}

\end{proposition}

\fi

\begin{proposition}	\label{propm2}
Let $\varphi:GL(A \otimes M_2) \rightarrow B$ be an arbitrary function where
$A$ and $B$ are $C^*$-algebras and $A$ is unital.
Then the following are equivalent:

\begin{itemize}
\item[(a)]
$\varphi$ extends to a $*$-homomorphism $A \otimes M_2 \rightarrow B$.

\item[(b)]
$\varphi$ is a uniformly continuous, $*$-semigroup homomorphism with
\begin{equation}   \label{eq22}
\|\varphi(1/2)\| < 1, \quad   \varphi(i1)=i 1.
\end{equation}

\end{itemize}

\end{proposition}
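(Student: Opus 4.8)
I would treat the two implications separately. For (a)$\Rightarrow$(b), if a $*$-homomorphism $\psi\colon A\otimes M_2\to B$ restricts to $\varphi$ on $GL(A\otimes M_2)$, then $\psi$ is contractive, so $\varphi$ is uniformly continuous, and it is multiplicative and $*$-preserving since $\psi$ is; moreover $\varphi(1)=\psi(1)$ is a projection, whence $\|\varphi(1/2)\|=\tfrac12\|\psi(1)\|\le\tfrac12<1$, and $\varphi(i1)=i\psi(1)=i\varphi(1)$, which is \eqref{eq22} (with the right-hand $1$ read as $\varphi(1)$). The substance is (b)$\Rightarrow$(a); write $D:=A\otimes M_2$, a unital $C^*$-algebra.

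The first steps are bookkeeping. Since $p:=\varphi(1)$ satisfies $p=\varphi(1)^2$ and $p=\varphi(1)^*$, it is a projection, and $\varphi(g)=\varphi(1\cdot g\cdot 1)=p\varphi(g)p$ shows $\varphi$ maps into the unital $C^*$-algebra $pBp$; replacing $B$ by $pBp$ I may assume $\varphi(1)=1_B$. Uniform continuity lets me extend $\varphi$ continuously to $\overline{GL(D)}$ (which contains $0=\lim_n\tfrac1n1$); the extension is still multiplicative and $*$-preserving, and $\varphi(0)=\lim_n\varphi(\tfrac12 1)^n=0$ by the first inequality in \eqref{eq22}. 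Next I would build a map $L\colon D\to B$ out of one-parameter subgroups: $t\mapsto\varphi(\exp(tx))$ is a norm-continuous one-parameter subgroup (it lies in $GL(B)$ because $\varphi(\exp(tx))\varphi(\exp(-tx))=1_B$), hence of the form $\exp(tL(x))$ for a unique $L(x)\in B$. Reparametrising gives $L(sx)=sL(x)$ for $s\in\R$; $\exp(x)^*=\exp(x^*)$ for real exponent gives $L(x^*)=L(x)^*$; and $L$ is continuous at $0$, since there $L=\log\circ\,\varphi\circ\exp$. Additivity of $L$ I would get from the Lie product formula: $(\exp(x/n)\exp(y/n))^n\to\exp(x+y)$ in norm, and applying $\varphi$ (uniformly continuous, multiplicative on finite products) together with the Lie product formula once more in $B$ yields $\exp(L(x+y))=\exp(L(x)+L(y))$; rescaling $x,y\mapsto x/m,y/m$ and letting $m\to\infty$ gives $L(x+y)=L(x)+L(y)$. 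So $L$ is a bounded $\R$-linear, $*$-preserving map with $\varphi(\exp x)=\exp(L(x))$.

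The decisive step, which I expect to be the main obstacle, is showing that $L$ is multiplicative, $L(xy)=L(x)L(y)$. Baker--Campbell--Hausdorff applied to $\exp(L(x+y+\tfrac12[x,y]+\cdots))=\exp(L(x))\exp(L(y))$ already forces the Lie-homomorphism identity $L([x,y])=[L(x),L(y)]$, but promoting this to an associative homomorphism is exactly where the $M_2$-factor must enter: one uses the explicit identities in $GL(A\otimes M_2)$ between the unipotent subgroups $1+ae_{12}$ and $1+be_{21}$, the diagonal elements $\operatorname{diag}(a,a^{-1})$ (products of these unipotents and the flip, by the Whitehead-lemma identity), and their conjugates under $\operatorname{diag}(\lambda,1)$; here I would follow, and attempt to streamline, the argument of \cite{burgmatrix}. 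Granting multiplicativity, everything else is short: $L(1)=L(1)^2$ is a projection, and $\|\varphi(1/2)\|<1$ forces $\exp(-(\log 2)L(1))$ to have norm $<1$, hence $L(1)$ positive and invertible, hence $L(1)=1_B$; then $L(i1)$ is skew-adjoint with $L(i1)^2=L(-1)=-1_B$, so $L(i1)=i(q-q^{\perp})$ for a projection $q$, and $\varphi(i1)=\exp(\tfrac{\pi}{2}L(i1))=i1_B$ forces $q^{\perp}=0$, i.e.\ $L(i1)=i1_B$ (this is what rules out the conjugate-linear alternative); therefore $L(ix)=L((i1)x)=L(i1)L(x)=iL(x)$, and $L$ is $\C$-linear. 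Thus $\psi:=L$ is a $*$-homomorphism $A\otimes M_2\to B$.

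It remains to check $\psi|_{GL(D)}=\varphi$. On the identity component this is immediate, since every element there is a finite product of exponentials and $\psi(\prod_j\exp x_j)=\prod_j\exp(L(x_j))=\prod_j\varphi(\exp x_j)=\varphi(\prod_j\exp x_j)$; by polar decomposition $g=u\exp(\tfrac12\log g^*g)$ the general case reduces to unitaries, and a unitary $u$ with $-1\notin\sigma(u)$ equals $\exp(ih)$, already covered. The unitaries outside the identity component of $U(D)$ are the last delicate point: here I would again invoke uniform continuity through the single-valued, multiplicative extension of $\varphi$ to $\overline{GL(D)}$, whose values at non-invertible boundary elements — approached simultaneously from the identity component and from $u\,GL(D)^{\circ}$ — pin down $\varphi(u)=\psi(u)$.
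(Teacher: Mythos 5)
There is a genuine gap, and it sits exactly at the point you yourself flag as ``the decisive step'': the multiplicativity of your map $L$. Everything before it (extension to $\overline{GL(D)}$, $\varphi(0)=0$, the one-parameter-group definition of $L$, real linearity via the Lie product formula) and everything after it ($L(1)=1$, $L(i1)=i1$ ruling out conjugate-linearity, hence $\C$-linearity) is routine bookkeeping \emph{conditional} on $L(xy)=L(x)L(y)$; but that identity is precisely where the $M_2$-stabilization and the hypotheses \eqref{eq22} have to do real work, and you do not supply an argument — you only say you would ``follow, and attempt to streamline, the argument of \cite{burgmatrix}.'' That is not a reduction to a citable statement: the result from \cite{burgmatrix} that actually applies here (Proposition 2.6, which is what the paper invokes) takes as input the uniformly continuous $*$-\emph{semigroup} homomorphism $\varphi$ together with the ortho-additivity relation $\varphi(e_{11}+e_{22})=\varphi(e_{11})+\varphi(e_{22})$, not a $*$-preserving Lie-algebra map $L$ with $L([x,y])=[L(x),L(y)]$; passing from a Lie homomorphism to an associative $*$-homomorphism is a substantial step in its own right, and your sketch gives no mechanism for it. So the heart of the proposition is missing from your proposal. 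The paper's own route makes this visible: after extending $\varphi$ to the closure and getting $\varphi(0)=0$, the only new work is Moln\'ar's exponential trick — for a projection $P$, the one-parameter group $\lambda\mapsto\varphi(1-P+e^{\lambda}P)=e^{\lambda T}$ has, by uniform continuity, $\sigma(T)\subseteq\{0,1\}$, whence $\varphi(1-P)=1-T$, $\varphi(P)=T$ and $\varphi(1)=\varphi(P)+\varphi(1-P)$; taking $P=e_{11}$ gives exactly the ortho-additivity needed to quote \cite[Prop.~2.6]{burgmatrix}, which then does all the heavy lifting you left open.

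A secondary weak point: your final verification that $\psi$ restricts to $\varphi$ on $GL(D)$ is only argued on the identity component; for unitaries outside $GL(D)^{\circ}$ the proposed device (``pinning down'' $\varphi(u)$ at non-invertible boundary elements approached from both $GL(D)^{\circ}$ and $u\,GL(D)^{\circ}$) is a gesture, not an argument — it is not explained which boundary elements exist in both closures or how their values determine $\varphi(u)$. Note that this whole step evaporates in the paper's approach, since there the cited proposition produces the extension of $\varphi$ directly rather than an independently constructed map that must afterwards be matched with $\varphi$. The constructive suggestion is that your own one-parameter-group analysis, specialized to $x=P$ a projection, \emph{is} Moln\'ar's trick; if you push it to the conclusion $\varphi(1)=\varphi(e_{11})+\varphi(e_{22})$ and then invoke \cite[Prop.~2.6]{burgmatrix} for the semigroup map $\varphi$ itself, you recover a complete proof — essentially the paper's — without needing $L$ at all.
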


\begin{remark} \label{remark1}
{\rm
Alternatively, instead of requiring $\|\varphi(1/2)\| < 1$ in Proposition \ref{propm2}.(b), we may equivalently require that $\|\varphi(z)\| < 1$ for any single fixed 
$z \in GL(A \otimes M_n)$ with $\|z\| < 1$.
}
\end{remark}

\if 0
Irgendwie seltsam, was mit $\varphi(x) = 1$ for all $x$

vermutlich wegen: $\lim_{\lambda \rightarrow 0} \varphi(\lambda) \neq 0$.

Notice that we may equivalently say that the image of $\varphi$ is a group and $\varphi$ is a uniformly continuous
$*$-group homomorphism. So we actually have to do with groups and group homomorphisms rather than semigroups.
\fi

\begin{proof}
%
%

(a) to (b) is clear. To show (b) to (a), we are going to apply \cite[Proposition 2.6]{}.
At first we continuously extend $\varphi$ to an equally denoted function $\varphi:\overline{GL(A\otimes M_2)} \rightarrow B$ (norm closure) by using Cauchy sequences
and the uniform continuity of $\varphi$. Then $\varphi$ is a $*$-semigroup homomorphism.
Notice that $\varphi(0) =  \lim_{n} \varphi(z^n)=0$ 
by 
Remark \ref{remark1}.
By applying Proposition 2.6 of \cite{burgmatrix} we are done when showing the ortho-additivity relation
$\varphi(e_{11}+e_{22}) = \varphi(e_{11}) + \varphi(e_{22})$, where $e_{ii}$ are the standard matrix corners.
To this end,
we use the following
trick by L. Moln\'ar \cite{molnar} by means of the exponential function,
which we are going to recall for convenience of the reader.

Consider the $C^*$-subalgebra $B'$ of $B$ generated by the image of $\varphi$. It is unital with unit $\varphi(1)$.
Represent $B'$ faithfully on a Hilbert space $H$ such that $1_{B(H)}$ is the unit of $B'$.
In the following, identify now $B'$ as a subalgebra of $B(H)$. 


Let $P$ be a projection in $M_2(A)$. Clearly $e^{\lambda P}$ is invertible for every $\lambda \in \R$ and so in the domain of $\varphi$.
Consider the map $\lambda \mapsto \varphi(e^{\lambda P}) = \varphi(1-P + e^{\lambda} P)$
from $\R$ into $GL(B(H))$.
This is a one-parameter group.
Thus there exists an operator $T \in B(H)$ such that
$$\varphi(1-P + e^{\lambda} P) = e^{\lambda T}.$$
Since $\varphi$ is $*$-preserving, $e^{\lambda T}$ is self-adjoint for all $\lambda \in \R$.
This implies that $T$ is also self-adjoint.
By the uniform continuity of $\varphi$, for every $\varepsilon >0$
there exists a $\delta >0$ such that
$$\|e^{\lambda T}- e^{\mu T}\| = \sup_{t \in \sigma(T)} |e^{\lambda t}- e^{\mu t}| < \varepsilon$$
if $|e^{\lambda}- e^{\mu}| < \delta$. The last identity is by standard functional calculus.
Therefore, the function $x \mapsto x^t$ is uniformly continuous on the positive half-line for all
$t \in \sigma(T)$.
Hence $\sigma(T) \subseteq \{0,1\}$ and so $T$ is a projection.


Consequently,
$$\varphi(1-P + e^{\lambda} P) = 1-T +e^{\lambda} T.$$
For $\lambda \rightarrow -\infty$ we get $\varphi(1-P) = 1 - T$.
Setting $P=1$ and using $\varphi(0)=0$ this implies $T=1$, and consequently
$\varphi(e^\lambda 1)= e^\lambda 1.$ In particular, $\varphi$ is $\R_+$-homogeneous.

Hence the above equality divided by $e^\lambda$ and letting $\lambda \rightarrow \infty$ yields $\varphi(P)=T$.
Thus, putting $\lambda=1$,
$$\varphi(1)= \varphi(1-P) + \varphi(P).$$
%
Now set $P=e_{11}$.
\end{proof}

We remark that in Proposition \ref{propm2}.(b) $\varphi$ is obviously actually a {\em group} homomorphism
into the image of $\varphi$. So let us also state the following variant to emphasize this fact:

\begin{corollary}  \label{corollary22}
Let $\varphi:GL(A \otimes M_2) \rightarrow GL(B)$ be an arbitrary function where
$A$ and $B$ are unital $C^*$-algebras.
Then the following are equivalent:

\begin{itemize}
\item[(a)]
$\varphi$ extends to a unital $*$-homomorphism $A \otimes M_2 \rightarrow B$.

\item[(b)]
$\varphi$ is a uniformly continuous, $*$-preserving group homomorphism satisfying (\ref{eq22}).

\end{itemize}

\end{corollary}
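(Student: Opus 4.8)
The plan is to derive Corollary~\ref{corollary22} directly from Proposition~\ref{propm2} together with Remark~\ref{remark1}, by checking that the two hypotheses are genuinely equivalent once the codomain is restricted to $GL(B)$. The implication (a)$\Rightarrow$(b) is immediate: a unital $*$-homomorphism $A\otimes M_2\to B$ restricts to a group homomorphism $GL(A\otimes M_2)\to GL(B)$ which is manifestly $*$-preserving and norm-contractive, hence uniformly continuous, and it sends $i1$ to $i1$ and $1/2$ to the element $1/2$ of $GL(B)$, which has norm $1/2<1$; so (\ref{eq22}) holds.

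For (b)$\Rightarrow$(a), first I would observe that a $*$-preserving group homomorphism is automatically a $*$-semigroup homomorphism \emph{on the domain $GL(A\otimes M_2)$}, since multiplicativity and $*$-preservation are exactly the semigroup-homomorphism axioms; the only gap between the two notions is that Proposition~\ref{propm2} formally takes an arbitrary function $GL(A\otimes M_2)\to B$ whereas here we start with a function into $GL(B)\subseteq B$. So $\varphi$, viewed as a map $GL(A\otimes M_2)\to B$, satisfies all of Proposition~\ref{propm2}.(b): it is a uniformly continuous $*$-semigroup homomorphism with $\|\varphi(1/2)\|<1$ and $\varphi(i1)=i1$. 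Invoking Proposition~\ref{propm2} we obtain a $*$-homomorphism $\Phi:A\otimes M_2\to B$ extending $\varphi$. It remains to upgrade ``$*$-homomorphism'' to ``unital $*$-homomorphism'': but in the proof of Proposition~\ref{propm2} it was shown that $\varphi(1)=\varphi(1-P)+\varphi(P)$ and, taking $P=1$ with $\varphi(0)=0$, that in fact the extension sends $1$ to the unit of the relevant Hilbert-space representation; more concretely, since $1\in GL(A\otimes M_2)$ is a unit for the semigroup $GL(A\otimes M_2)$, its image $\varphi(1)=\Phi(1)$ is an idempotent that acts as a unit on $\Phi(GL(A\otimes M_2))$ and hence on $\Phi(A\otimes M_2)=\overline{\mathrm{span}}$ of that set. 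If one wants $\Phi$ literally unital as a map into $B$ (not merely into the corner $\Phi(1)B\Phi(1)$), one uses that $\varphi$ already maps into $GL(B)$: then $\varphi(1)$ is invertible in $B$ \emph{and} idempotent, forcing $\varphi(1)=1_B$; this is the one place where the hypothesis ``into $GL(B)$'' rather than ``into $B$'' is actually used.

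The main obstacle, such as it is, is bookkeeping about where the unit goes: one must be careful that Proposition~\ref{propm2} as stated does not claim the extension is unital, so the corollary's strengthening to ``unital'' has to be extracted either from the internal computation $\varphi(1)=\varphi(1-P)+\varphi(P)$ in that proof, or—more cleanly—from the observation just made that an idempotent invertible element of a unital $C^*$-algebra is the identity. I would phrase the argument using the latter, so that the proof of the corollary does not need to reopen the proof of the proposition. The remaining verifications (that $\|\varphi(1/2)\|<1$ in the forward direction, that a group homomorphism into $GL(B)$ is a semigroup homomorphism into $B$) are routine and I would dispatch them in a sentence each.

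\begin{proof}
(a)$\Rightarrow$(b) is clear, as a unital $*$-homomorphism restricts to a contractive, hence uniformly continuous, $*$-preserving group homomorphism $GL(A\otimes M_2)\to GL(B)$, and it sends $i1\mapsto i1$ and $1/2\mapsto 1/2$, the latter of norm $<1$.

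Conversely, assume (b). Composing with the inclusion $GL(B)\hookrightarrow B$, the map $\varphi$ is a uniformly continuous $*$-semigroup homomorphism $GL(A\otimes M_2)\to B$ satisfying (\ref{eq22}), so Proposition~\ref{propm2} yields a $*$-homomorphism $\Phi:A\otimes M_2\to B$ with $\Phi|_{GL(A\otimes M_2)}=\varphi$. Since $1\in GL(A\otimes M_2)$, the element $\Phi(1)=\varphi(1)$ lies in $GL(B)$; as $\Phi$ is multiplicative, $\Phi(1)=\Phi(1)^2$, and an invertible idempotent in a unital $C^*$-algebra equals the unit, so $\Phi(1)=1_B$. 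Hence $\Phi$ is a unital $*$-homomorphism, giving (a).
\end{proof}
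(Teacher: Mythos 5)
Your proof is correct and follows essentially the same route as the paper, which states the corollary as an immediate variant of Proposition \ref{propm2} based on the observation that a $*$-preserving group homomorphism into $GL(B)$ is in particular a uniformly continuous $*$-semigroup homomorphism into $B$. Your explicit extraction of unitality from the fact that $\Phi(1)=\varphi(1)$ is an invertible idempotent in the unital $C^*$-algebra $B$ is a clean way to settle the one point the paper leaves implicit.
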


\begin{examples}
{\rm
\begin{itemize}

\item
The determinante $\mbox{det}:GL(M_n(\C))) \rightarrow GL(\C)$, though a
continuous $*$-preserving
group homomorphism, cannot be extended to a $*$-homomorphism because
$\det(\lambda 1) = \lambda^n$, which
is not uniformly continuous.

\item
The trivial group homomorphism $\varphi:GL(M_n(A)) \rightarrow GL(B)$, $\varphi(x)=1$,
though a uniformly continuous $*$-preserving group homomorphism, cannot be extended to a $*$-homomorphism
because $\|\varphi(1/2)\| =1$.

\end{itemize}
}
\end{examples}

\if 0
\begin{corollary}
A function $\varphi:GL(A \otimes M_2) \rightarrow B$ extends to a $*$-homomorphism
$A \otimes M_2 \rightarrow B$ if and only if $\varphi$
is a uniformly continuous $*$-semigroup homomorphism with $\varphi(0)=0$ and $\varphi(i)=i$.

\end{corollary}



\begin{proposition}
Sei $\phi:M_2(A) \rightarrow B(H)$ $*$-homomorphism
der gleichmässig stetig auf $span(e_{11}, e_{22})$.

Dann $\phi$ linear (oder antilinear).
\end{proposition}

\begin{proof}

Betrachte $\phi(e^{\lambda P}) =\phi(Q + e^\lambda P)$
wie in Molnar. Hier $Q=1-P$.

\end{proof}
\fi


\if 0
\begin{proposition}
kombiniere mit letzter prop von article um nebenbedingung wegzuubekommen.

\end{proposition}
\fi

\section{$KK$-theory and generators}	\label{section3}


In this section we deal with the Kasparov category $KK$. This is the category with object class being the $C^*$-algebras,
and morphism class from $C^*$-algebra $A$ to $C^*$-algebra $B$ being the Kasparov group $KK(A,B)$. Composition of morphisms
is defined to be the Kasparov product $KK(A,B) \times KK(B,C) \rightarrow KK(A,C): (f,g) \mapsto fg := f \otimes_B g$. Analogously, we have the Kasparov category $KK^G$ in the group equivariant setting with respect to a given second-countable locally compact group $G$.

By the work of J. Cuntz \cite{cuntz1984} and N. Higson \cite{higson} it became clear that Kasparov's $KK$-theory
allows a very elegant characterization 
when restricted to the class of ungraded separable $C^*$-algebras.
Cuntz noted that if $F$ is a stable, homotopy invariant, split-exact functor $F$ from the $C^*$-category $C^*$ to the abelian
groups $Ab$, then each $KK$-theory element of $KK(A,B)$ induces a map $F(A) \rightarrow F(B)$. Higson brought these findings to its final form by showing
that the Kasparov category $KK$ 
is universal in this respect in the sense that every such functor $F$ factorizes over the Kasparov category $KK$.
This fact is called the {\em universal property of $KK$-theory}.
K. Thomsen has generalized this result to the group equivariant setting, that is, to the category $KK^G$.

Quite straightforward, 
in \cite{burg_generators_KK} we described $KK^G$-theory by generators and relations based on Cuntz and Higsons's findings.
We 
denoted it by $GK$-theory (`generators $K$-theory', the group $G$ is not indicated) for better clearity.
One advantage of this basic construction is that it may be straightforwardly generalized to other
modes of equivariance, that is, to other objects than groups $G$,
for example semigroups $G$, categories $G$ and so on. Also, one may change the category $C^*$ to another category of (topological)
algebras under adaption
of the stability property, say.
Another advantage is 
that it is more elementary than Kasparov's original definition.
Its definition is also 
clearer motivated by its relative naturality,
whereas the definition of the original
$KK$-theory appears highly unmotivated at first
(without further background like the Atyiah--Singer index theory). 
Also Cuntz's picture of $KK$-theory by quasi isomorphisms in \cite{cuntzpicture} appears still rather technical and difficult.

A disadvantage of $GK$-theory is that the Kasparov product is not computed.
It remains a formal, uncomputed product $f \cdot g$.
On the other hand, this makes $GK$-theory also easy, again. 
Also, the general construction of the Kasparov product in $KK$-theory uses the indirect,
unexplicit axiom of choice.
In concrete computations the product has to be guessed, which is rather difficult.
%

\if 0
In practical terms, however,
it seems impossible to do the work in $GK$-theory that has been done so far in $KK$-theory, even it would be theoretically possible
by translation.
\fi


We are going to briefly recall $GK$-theory. For more details see \cite{burg_generators_KK}.


\begin{definition}[$C^*$-category $C^*$]
{\rm
Let $G$ be a second-countable locally compact group, or a discrete countable inverse semigroup.
Denote by $C^*$ the category with objects being the $C^*$-algebras equipped with an action by $G$, and morphisms being the 
$G$-equivariant
$*$-homomorphisms.
}
\end{definition}

If nothing else is said, we could also allow that $G$ is another 
equivariance-inducing object like a general topological group, or a groupoid, or a category, or a semigroup and so on.

\begin{definition}[Synthetical morphisms]	\label{defsyn}
{\rm
We introduce two types of {\em synthetical morphisms}.

\begin{itemize}

\item[(a)]
For each {\em corner embedding} $c \in C^*(A,A \otimes \calk)$, that is a map
defined by $c(a)=a \otimes e$ for a one-dimensional 
projection $e \in \calk$
(where the $G$-action on $A \otimes \calk$ need not be diagonal but may be any)
introduce one synthetical morphism (inverse map, localization)
$c^{-1}:A \otimes \calk \rightarrow A$. 

\item[(b)]
For each short split exact sequence
\begin{equation}    \label{ses}
\cals: \xymatrix{
0 \ar[r] & A \ar[r]^i  & D \ar@<.5ex>[r]^f  &  B \ar[r]  \ar@<.5ex>[l]^s  & 0}
\end{equation}
in ${C^*}$
introduce one synthetical morphism $P_{\cals}^{-1}: D \rightarrow A \oplus B$ (inverse map, localization).
\end{itemize}
}
\end{definition}

\begin{definition}[Preadditive Category $W$]     \label{defW}
{\rm

Let $W$ be the preadditive category with object class $\mbox{Obj}(C^*)$.
The morphism class $W(A,B)$ from
object $A$ to object $B$ let be
the collection of all formal expressions
\begin{equation}   \label{exp1}
\pm a_{11} a_{12} \cdots a_{1 n_1} \pm \cdots \cdots \pm a_{k, 1} a_{k, 2} \cdots a_{k, n_k} ,
\end{equation}
where each {\em letter} $a_{ij}$ is either a morphism
in $C^*$ or one of the
synthetical morphisms $c^{-1}$ or $P_\cals^{-1}$ of Definition \ref{defsyn}.
Each $\pm$ stands here either for a single $+$-sign or a single $-$-sign.

We think of a {\em word} $a_{i1} \cdots a_{i, n_i}$ as a composition
of morphisms (=arrows) $a_{ij}$ going from the left to the right with start point $A$ and end point $B$, that is, as a
picture
$$\xymatrix{A= A_{i1} \ar[r]^{a_{i1}} & A_{i2}
\ar[r]^{a_{i2}} & A_{i3}  \ar[r]^{a_{i3}} &
\cdots & \ar[r]^{a_{i,n_i}}  & A_{i,n_i} = B }$$
for objects $A_{ij}$.
We require here that the range object $A_{i,j+1}$ of the morphism $a_{ij}$
coincides with the source object of the morphism $a_{i,j+1}$ for all
$ij$.
%

Composition and addition of morphisms in $W$ is given formally (i.e. freely). 
That is, we add and multiply morphisms of the from (\ref{exp1}) like in a ring
by using the distributive law.
}
\end{definition}

\begin{definition}[{$GK$-theory}]	\label{gktheory}
{\rm
The category $GK$ is defined to be additive category which
comes out when dividing 
the preadditive category $W$ 
by the following relations:
\begin{itemize}

\item[(a)]
The canonical assignment $C^* \rightarrow GK$ is a {\em functor},
i.e. we require $f g = g \circ f$ in $GK(A,C)$ for all elements $f \in C^*(A,B)$
and $g \in C^*(B,C)$.

\item[(b)]
The category $GK$ is {\em additive}, i.e. we require $p_A i_A + p_B i_B = 1_{A \oplus B}$ 
in $GK(A \oplus B, A \oplus B)$
for all natural diagrams $\xymatrix{
A \ar@<.5ex>[r]^{i_A} & A \oplus B  \ar@<.5ex>[r]^{p_B}
\ar@<.5ex>[l]^{p_A}  & B \ar@<.5ex>[l]^{i_B}  }$
(canonical injections and projections)
in $C^*$.

\item[(c)]
The category $GK$ is {\em homotopy invariant}, that is, 
every pair of 
homotopic $G$-equivariant $*$-homomorphisms $f_0,f_1:A \rightarrow B$ (homotopic within $C^*$)
satisfies the identity $f_0 = f_1$
in $GK$.

\item[(d)]
The category $GK$ is {\em stable}, that is, every corner embedding
$c$ is invertible in $GK$ with inverse $c^{-1}$ as introduced in Definition \ref{defsyn}.(a).

\item[(e)]
The category $GK$ is {\em split exact}, that is, 
for every split exact sequence (\ref{ses})
in ${C^*}$
the morphism $P_\cals := p_A i + p_B s $ 
in the following diagramm
\begin{equation}    \label{ses2}
\xymatrix{ && A \oplus B \ar@<-.5ex>[lldd]_{p_A} \ar@<.5ex>[rrdd]^{p_B}  \ar@<.5ex>[dd]^{P_\cals} &&  \\
&&&&\\
A \ar@<.5ex>[rr]^i  \ar@<-.5ex>[rruu]_{i_A}  &&  D \ar@<.5ex>[rr]^f  \ar@{.>}@<.5ex>[ll]^{t_\cals}  \ar@{.>}@<.5ex>[uu]^{{P_\cals}^{-1}}  && B  \ar@<.5ex>[ll]^s  \ar@<.5ex>[lluu]^{i_B}} 
\end{equation}
is invertible in $GK$ with inverse $P_{\cals}^{-1}$ as introduced
in Definition \ref{defsyn}.(b).
(Here, $p_A,p_B,i_A,i_B$ are the canonical projections and injections, and the dotted arrow $t_\cals$ may be ignorred here.)

\end{itemize}



}
\end{definition}

\if 0
\begin{definition}
[{\bf $GK$-theory}]
{\rm
We enrich the $C^*$-category ${\bf C^*}$ by (sufficently many but not more) generators $\calg$ and divide out relations $\calr$ such that
the resulting quotient category $GK := ({\bf C^*} \cup \calg)/\calr$ satisfies the following properties:
\begin{itemize}
\item
{\em Additive Category} $\quad$
The category is additive. (That is, the Hom-sets $GK(A,B)$ form additive groups.)
\item
{\em Stability} $\quad$
For every $C^*$-algebra $A$, every corner embedding (and $*$-homomorphism) $A \rightarrow A \otimes \calk$ sending $a \mapsto a \otimes p$
(where $p$ is a one-dimensional projection)
is
invertible,

\item
{\em Homotopy invariance} $\quad$
Every pair of homotopic $*$-homomorphisms $f_0,f_1:A \rightarrow B$ between $C^*$-algebras $A$ and $B$ satisfies the identity $f_0 = f_1$.

\item
{\em Split exactness} $\quad$
For every split exact sequence
$$\xymatrix{
0 \ar[r] & A \ar[r]^i  & D \ar@<.5ex>[r]^f  &  B \ar[r]  \ar@<.5ex>[l]^s  & 0}$$
in ${\bf C^*}$
the morphism $P:= p_A i + p_B s : D \rightarrow A \oplus B$ in the following diagramm
$$\xymatrix{ && A \oplus B \ar@<-.5ex>[lldd]_{p_A} \ar@<.5ex>[rrdd]^{p_B}  \ar@<.5ex>[dd]^{P} &&  \\
&&&&\\
A \ar@<.5ex>[rr]^i  \ar@<-.5ex>[rruu]_{i_A}  &&  D \ar@<.5ex>[rr]^f  \ar@{.>}@<.5ex>[ll]^t  \ar@{.>}@<.5ex>[uu]^{{P}^{-1}}  && B  \ar@<.5ex>[ll]^s  \ar@<.5ex>[lluu]^{i_B}},$$
is invertible.
(Here, $p_A,p_B,i_A,i_B$ are the canonical projections and injections, $P^{-1}$ is the required inverse for $P$, and the arrow $t$ has to be ignorred
for the moment 
(to be used later).)
\end{itemize}
}
\end{definition}
\fi

\if 0
For more details see. We note that $GK$ has the same object class of separable $C^*$-algebras as ${\bf C^*}$; no new objects are added.
Every morphism in $GK$ may be presented as a signed sum of words
\begin{equation}   \label{exp1}
(-1)^{s_1} a_{11} a_{12} \cdots a_{1 n_1} +\cdots \cdots + (-1)^{s_k} a_{k1} a_{k2} \cdots a_{k n_k}
\end{equation}
in the letters $a_{ij} \in {\rm Morphisms}({\bf C^*}) \cup \calg$, where $s_i \in \{1,-1\}$.
\fi

\if 0
In short, $KK$-theory is the quotient category of the $C^*$-category
enriched by certain further morphisms and relations which say that this new quotient is an additive category
which is stable, homototpy invariant and split-exact.
%
We may of course also speak about having localized the category
$C^*$ to obtain $GK$.
\fi

The category $GK$ is just another model for Kasparov's $KK^G$-theory:

\begin{proposition}[\cite{burg_generators_KK}]   \label{propiso}
Let $G$ be a locally compact second-countable group, or a discrete countable inverse semigroup.
Let $C^*$ be restricted to the subcategory of separable $C^*$-algebras.

Then, the categories $KK^G$ and $GK$ are isomorphic. 
\end{proposition}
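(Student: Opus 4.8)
The plan is to construct mutually inverse functors between $GK$ and $KK^G$ that are the identity on objects, and then verify they are inverse to each other. The key input is the universal property of $KK^G$-theory (Kasparov--Cuntz--Higson--Thomsen): the canonical functor $C^* \to KK^G$ is stable, homotopy invariant and split exact, and every such functor from $C^*$ to an additive category factors uniquely through $KK^G$.

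First I would build a functor $\Phi: GK \to KK^G$. Since $GK$ is, by construction (Definition \ref{gktheory}), the universal additive category receiving a stable, homotopy invariant, split exact functor from $C^*$ --- that is, it is obtained from the free preadditive category $W$ on $C^*$ together with the synthetical inverses by imposing exactly the relations (a)--(e) --- it suffices to observe that the canonical functor $C^* \to KK^G$ has all these properties, hence sends each corner embedding $c$ and each morphism $P_\cals$ to an invertible morphism in $KK^G$. Sending the letters $c^{-1}$, $P_\cals^{-1}$ to the corresponding inverses in $KK^G$ and extending multiplicatively and additively over the formal expressions \eqref{exp1} gives a well-defined additive functor $\Phi: W \to KK^G$ which, because $KK^G$ satisfies all of (a)--(e), descends to $\Phi: GK \to KK^G$. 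This step is essentially formal once one has set up $GK$ as a quotient of $W$; the only thing to check is that the defining relations of $GK$ are respected, which holds by the listed properties of $KK^G$.

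Next I would build a functor $\Psi: KK^G \to GK$. For this I invoke the universal property of $KK^G$ in the opposite direction: it suffices to show that the canonical functor $\iota: C^* \to GK$ (sending a $C^*$-algebra to itself and a $*$-homomorphism to its class) is itself stable, homotopy invariant and split exact. Homotopy invariance is relation (c); stability is relation (d) (corner embeddings are invertible in $GK$, with explicit inverse $c^{-1}$); split exactness is relation (e) together with additivity (b) (the morphism $P_\cals$ is invertible, so $\iota$ carries split exact sequences to split exact -- in fact direct-sum -- diagrams). By the universal property there is then a unique additive functor $\Psi: KK^G \to GK$ with $\Psi \circ (\text{canonical}) = \iota$.

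Finally I would check $\Phi \circ \Psi = \mathrm{id}_{KK^G}$ and $\Psi \circ \Phi = \mathrm{id}_{GK}$. The first follows from the uniqueness clause of the universal property of $KK^G$: $\Phi \circ \Psi$ is an additive endofunctor of $KK^G$ commuting with the canonical functor from $C^*$, hence is the identity. The second requires showing $\Psi\circ\Phi$ fixes every morphism of $GK$; since $GK$ is generated under composition and addition by the images of $C^*$-morphisms together with the synthetical inverses $c^{-1}$ and $P_\cals^{-1}$, and $\Psi\circ\Phi$ fixes the classes of $C^*$-morphisms (both functors are compatible with the canonical functor from $C^*$) and fixes each $c^{-1}$ and $P_\cals^{-1}$ (as these are characterized as the unique inverses of $c$ resp. $P_\cals$, which are preserved), the composite is the identity on generators and hence, being additive and multiplicative, on all of $GK$. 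I expect the main obstacle to be purely bookkeeping: one must be careful that $GK$ has been set up so that its morphisms really are generated by exactly these letters modulo exactly the relations (a)--(e), so that "check on generators" arguments are legitimate; this is the content of the companion paper \cite{burg_generators_KK} and should be quoted rather than reproved here.
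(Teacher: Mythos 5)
Your proposal is correct and follows essentially the same route as the paper, which simply observes that $KK^G$ and $GK$ are characterized by the same universal property and defers the details (your two-functor construction and the uniqueness argument) to \cite[Theorem 5.1]{burg_generators_KK}. Your write-up just makes explicit the factorizations and the check on generators that the paper leaves to the cited reference.
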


\begin{proof}
Almost evident as
$KK^G$-theory %
and $GK$-theory are characterized by the same universal property. 
See \cite[Theorem 5.1]{burg_generators_KK} for more details. 
\end{proof}

\if 0
It should be remarked, however, that the main advantage of $GK$ is 
more didactical, as a quick explanation
what $KK$-theory is. On the other hand it 
appears practically impossible to work with the category $GK$.
For example, the composition of morphisms $GK$ is only explained as a free product, whereas in $KK^G$-theory every morphism
is presented in some standard form (cycle). But the product in $KK^G$ is also rather difficult, and
its general existence construction
(without guessing the product for a particular given instance)
uses the axiom of choice. (Is is also used in the Cuntz-picture \cite{} where the product seems to be a kind of translation from the Kasparov product in Kasparov theory to the Cuntz picture).
%

\fi


In this section we are going to show that expression (\ref{exp1}) of a morphism in $GK$ may be considerably simplified.
A first simplification will be reduction of sum, where the notion word is defined in Definition \ref{defW}:

%

\begin{lemma} 
\label{lemma1}
In $GK$ we may rewrite any 
plus-signed sum $x_1 + \ldots +x_n$ of words $x_i$ as a single word
$x$.
In particular, any morphism 
in $GK$ is presentable as a difference 
$x-y$ of some words $x,y \in GK$.
\end{lemma}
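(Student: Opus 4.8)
The plan is to use the additivity relation (b) of Definition~\ref{gktheory} to absorb a sum of two words into a single word via the block-diagonal/corner trick, and then induct on the number of summands. The key observation is that in an additive category, for any two parallel morphisms $x_1, x_2 \colon A \to B$, one has $x_1 + x_2 = p_B (x_1 \oplus x_2) d_A$, where $d_A \colon A \to A \oplus A$ is the diagonal and $p_B \colon B \oplus B \to B$ is the codiagonal (sum of the two canonical projections), and $x_1 \oplus x_2 \colon A \oplus A \to B \oplus B$ is the block-diagonal morphism. So the first task is to express $d_A$, $p_B$, and $x_1 \oplus x_2$ each as single words.

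First I would note that $d_A$ and $p_B$ are honest $G$-equivariant $*$-homomorphisms (the codiagonal $p_B = p_B^{(1)} + p_B^{(2)}$ being a sum of two $*$-homomorphisms is a priori a two-term sum, but $B \oplus B \to B$, $(b_1,b_2) \mapsto b_1 + b_2$, \emph{is itself} a single $*$-homomorphism, hence a single letter), so they are single words. For $x_1 \oplus x_2$: writing $x_i = a_{i,1} a_{i,2} \cdots a_{i,n_i}$ as a composition of letters, I would pad the shorter word with identity morphisms so that $n_1 = n_2 =: m$, and then observe that $x_1 \oplus x_2 = \bigl(a_{1,1} \oplus a_{2,1}\bigr)\bigl(a_{1,2} \oplus a_{2,2}\bigr) \cdots \bigl(a_{1,m} \oplus a_{2,m}\bigr)$ in $GK$ (using functoriality, relation~(a), together with the fact that composition of block-diagonals is the block-diagonal of compositions). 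Thus it suffices to check that the block-diagonal $a \oplus a'$ of two letters is again a single word. If $a, a'$ are $*$-homomorphisms, then $a \oplus a'$ is a $*$-homomorphism, hence a single letter. If $a = c^{-1}$ is the inverse of a corner embedding $c \colon A \to A \otimes \calk$, then $c \oplus c'$ is again a corner embedding of $A \oplus A'$ into $(A \otimes \calk) \oplus (A' \otimes \calk)$ — here one uses that $\calk \oplus \calk \cong \calk$ and chooses the one-dimensional projection accordingly, or more simply observes $(A \oplus A') \otimes \calk \cong (A \otimes \calk) \oplus (A' \otimes \calk)$ and $c \oplus c'$ is precisely the corner embedding along $e \oplus e'$, which is a one-dimensional projection in $M_2(\calk) \cong \calk$ up to the identification — so $c \oplus c'$ comes with its synthetical inverse $(c \oplus c')^{-1}$, which by functoriality and uniqueness of inverses equals $c^{-1} \oplus c'^{-1}$ in $GK$, a single letter. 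Similarly, the block sum of two sequences $\cals \oplus \cals'$ is again a split exact sequence, its projection $P_{\cals \oplus \cals'}$ is a single letter, and $P_{\cals \oplus \cals'}^{-1} = P_\cals^{-1} \oplus P_{\cals'}^{-1}$ in $GK$; one must also reconcile the target $A \oplus B \oplus A' \oplus B'$ with $(A \oplus A') \oplus (B \oplus B')$ via a coordinate-permutation $*$-isomorphism, absorbed as an extra letter. The remaining mixed cases ($a$ a $*$-homomorphism, $a'$ a synthetical inverse, etc.) are handled the same way by padding with identities, which are $*$-homomorphisms.

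Granting that a two-term sum of words is a word, an easy induction finishes the first claim: $x_1 + \cdots + x_n = (x_1 + \cdots + x_{n-1}) + x_n$, where by inductive hypothesis the first bracket is a single word. For the second claim, a general morphism in $GK$ is by Definition~\ref{defW} a signed sum of words $\sum_i \pm x_i$; collecting the plus-signed terms into a single word $x$ and the minus-signed terms into a single word $y$ (each by the first part of the lemma — if there are no plus terms take $x$ to be a zero word, e.g.\ the zero $*$-homomorphism, and similarly for $y$), we get the desired form $x - y$.

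The main obstacle I anticipate is the bookkeeping around the identifications $\calk \oplus \calk \cong \calk$ and $(A \oplus A') \otimes \calk \cong (A \otimes \calk) \oplus (A' \otimes \calk)$, and the coordinate permutations for the split-exact generators — one must check that under these $*$-isomorphisms the block-diagonal of a synthetical generator is (conjugate to) a synthetical generator of the \emph{same} type, so that its synthetical inverse is available, and that the equivariant $G$-actions are respected (the definition explicitly allows non-diagonal actions on $A \otimes \calk$, which is exactly what makes this work). Everything else is formal manipulation with the distributive law and relations~(a)--(e).
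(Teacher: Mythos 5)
There is a genuine gap, and it sits exactly at the one point where the lemma has real content. Your strategy writes $x_1+x_2=p_B\,(x_1\oplus x_2)\,d_A$ and asserts that the codiagonal $p_B\colon B\oplus B\to B$, $(b_1,b_2)\mapsto b_1+b_2$, ``is itself a single $*$-homomorphism, hence a single letter.'' It is not: this map is linear and $*$-preserving but not multiplicative, since $(b_1+b_2)(b_1'+b_2')$ contains the cross terms $b_1b_2'+b_2b_1'$, which do not vanish in $B$ (in the category of $C^*$-algebras $A\oplus B$ is a product, not a coproduct; the fold map exists for the free product, not for the direct sum). So the codiagonal is not a morphism of $C^*$ and not a letter; as a morphism of $GK$ it is only given as the two-term sum of the words $p_B^{(1)}$ and $p_B^{(2)}$, which makes your reduction circular --- the sum you wanted to eliminate has merely been relocated into $p_B$. (A secondary, repairable issue of the same flavour: $c\oplus\mbox{id}$ and $c\oplus c'$ are not literally corner embeddings in the sense of Definition \ref{defsyn}.(a), so their inverses are not synthetical letters on the nose; one must exhibit them as short words, as the paper does with $(c\oplus\mbox{id}_X)^{-1}=(\mbox{id}_{A\otimes\calk}\oplus e)\,d^{-1}$.)

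The paper's proof is essentially your plan up to this point --- diagonal $j\colon X\to X\oplus X$, block sums $(x_i\oplus\mbox{id}_X)$ and $(\mbox{id}_X\oplus y_j)$ of the letters, with explicit word expressions (\ref{varthetasum}), (\ref{varthetasum2}) for the block sums of the synthetical generators --- but it replaces the nonexistent codiagonal $*$-homomorphism by the combination $d\,k^{-1}$, where $d\colon Y\oplus Y\to M_2(Y)$ is the block-diagonal embedding (which \emph{is} a $*$-homomorphism), $k\colon Y\to M_2(Y)$ is a corner embedding with synthetical inverse $k^{-1}$, and the rotation homotopy gives $i_2d=i_1d=k$ in $GK$, so that $d\,k^{-1}$ acts as the codiagonal. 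This stabilization-plus-rotation step is the missing idea in your proposal; without it (or some substitute for the fold map) the argument does not go through, whereas with it your outline becomes essentially the paper's proof.
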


\begin{proof}
By induction, it clearly suffices to show that any sum $x+y$ of two words $x,y \in GK$ 
is presentable as a single word.

Assume that we have given 
a split exact sequence
$\cals$, see (\ref{ses}), for which we consider $\vartheta:=P_\cals= p_A i + p_B s \in GK(X,Y)$
of Definition \ref{gktheory}.
Define 
$$(\vartheta \oplus \mbox{id}_X): X \oplus X \rightarrow Y \oplus X: \;$$
$$\vartheta \oplus \mbox{id}_X
:= p_A i \oplus \mbox{id}_X + p_B s \oplus 0_X
= (p_A \oplus \mbox{id}_X) (i \oplus \mbox{id}_X) + (p_B \oplus 0_X) (s \oplus 0_X).$$
Notice that $\vartheta \oplus \mbox{id}_X$ is just $P_{\calt}$
for the split exact sequence
$$\calt: \xymatrix{
0 \ar[r] & A \oplus X \ar[r]^{i \oplus \mbox{id}_X}  & D  \oplus X \ar@<.5ex>[r]^{f \oplus 0}  &  B \ar[r]  \ar@<.5ex>[l]^{s \oplus 0}  & 0}.
$$


Consider the canonical projections and embeddings
$$\xymatrix{ X \ar@<.5ex>[r]^{i_1}  &  X \oplus X  \ar@<.5ex>[l]^{p_1}  \ar@<.5ex>[r]^{p_2}  &  X , \ar@<.5ex>[l]^{i_2}
& Y &  Y \oplus X \ar[r]^{p_2'}  \ar[l]_{p_1'} & X }.$$
Set $\vartheta^{-1} := P_\cals^{-1}$. Then observe that
$$p_1 i_1 =  (\vartheta \oplus \mbox{id}_X) p_1' \vartheta^{-1} i_1, \qquad
 p_2 i_2 =  (\vartheta \oplus \mbox{id}_X) p_2' i_2,$$
%
so that with $p_1 i_1 + p_2 i_2 = \mbox{id}_{X \oplus X}$ we  get
\begin{eqnarray} 
\label{varthetasum}
(\vartheta \oplus \mbox{id}_X)^{-1} &=& p_1' \vartheta^{-1} i_1 + p_2' i_2, \\
\label{varthetasum2}
(\vartheta \oplus \mbox{id}_X) &=& ( \mbox{id}_{X \oplus X})(\vartheta \oplus \mbox{id}_X)= p_1 \vartheta i_1 + p_2 i_2'.
\end{eqnarray}

If we have given a corner embedding $\vartheta :=c \in C^*(X:=A,Y:= A \otimes \calk)$
then we set $(\vartheta \oplus \mbox{id}_X): X \oplus X \rightarrow Y \oplus X$
obvious and get again relations (\ref{varthetasum}) and (\ref{varthetasum2}).
Notice that in this case $(\vartheta \oplus \mbox{id}_X)^{-1}$ is just
the word $(\mbox{id}_{A \otimes \calk} \oplus e) d^{-1}$ 
for the corner embeddings
$d \in C^*(A \oplus X, A \otimes \calk  \oplus X \otimes \calk)$ and $e \in C^*(X, X \otimes \calk)$.


By some abuse of notation,
in the sequel we shall omit notating the primes in $p_1'$ and $p_2'$ and simply write $p_1$ and $p_2$ instead. In other words, we shall not indicate the involved
spaces $X$ and $Y$ in our notation, even when we are going to have different spaces. As already above, the index $1$ will mean projection or embedding on the first (left hand sided) coordinate,
and $2$ on the second (right hand sided) coordinate.


Let us be given two words $x_1^{\varepsilon_1} \ldots x_n^{\varepsilon_n}$ and $y_1^{\epsilon_1} \ldots y_m^{\epsilon_m}$
in $GK(X,Y)$, where $x_i \in GK(X_i,X_{i+1})$
and $y_j \in GK(Y_j,Y_{j+1})$ 
are either morphisms in $C^*$ or morphisms $P_\cals$, and let $\varepsilon_i, \epsilon_j \in \{1,-1\}$ present exponents
in case letters are invertible by synthetical inverses as defined in Definition \ref{defsyn}.
The expression $x_i^1=P_\cals^1$ is not allowed, because $P_\cals$ can be expressed
by morphisms in $C^*$.

Let $j:X \rightarrow X\oplus X$ be defined by $j(x)=(x,x)$.
Let $d:Y \oplus Y \rightarrow M_2(Y)$ be the diagonal embedding $d(x,y)= \left (\begin{matrix} x & 0 \\0 & y \end{matrix} \right )$
and $k:B \rightarrow M_2(Y)$ the corner embedding $k(x)= \left (\begin{matrix} x & 0 \\0 & 0 \end{matrix} \right )$.
Using the identities (\ref{varthetasum}) and (\ref{varthetasum2}) and their analogs, and the orthogonality relations
$i_2 p_1 = 0$ and $i_1 p_2 = 0$, the following computation shows our claim. 
Simply consider the word
\begin{eqnarray*}
 && j (x_1 \oplus \mbox{id}_X)^{\varepsilon_1} \cdots (x_n \oplus \mbox{id}_X)^{\varepsilon_n}
(\mbox{id}_X \oplus y_1)^{\epsilon_1} \cdots (\mbox{id}_X \oplus y_m)^{\epsilon_m} d k^{-1}  \\
&=&  
j  ( p_1 x^{\varepsilon_1} i_1 + p_2 i_2  ) \cdots  ( p_1 x_n^{\varepsilon_n} i_1 + p_2 i_2 )  \\
&& \cdot ( p_1 i_1 + p_2 y_1^{\epsilon_1} i_2 ) \cdots  ( p_1 i_1 + p_2 y_m^{\epsilon_m} i_2)
d k^{-1}    \\
&=&
j  ( p_1 x^{\varepsilon_1}  \cdots x_n^{\varepsilon_n} i_1 + p_2 y_1^{\epsilon_1}  \cdots  y_m^{\epsilon_m} i_2)
d k^{-1}   \\
&=&
x^{\varepsilon_1}  \cdots x_n^{\varepsilon_n} +  y_1^{\epsilon_1}  \cdots  y_m^{\epsilon_m},
\end{eqnarray*}
where for the last identity we have used that the $*$-homomorphism $i_2 d$ is homotopic to the $*$-homomorphism $i_1 d$ by rotation,
and $i_1 d k^{-1} = \mbox{id}_Y$.
%
\end{proof}

\if 0
\begin{definition}
{\rm
The {\em split-exactness procedure} says that for every split exact sequence
%
$$\xymatrix{
0 \ar[r] & A \ar[r]^i  & D \ar@<.5ex>[r]^f  &  B \ar[r]  \ar@<.5ex>[l]^s  & 0}$$
we consider the canonical 
diagramm
$$\xymatrix{ && A \oplus B \ar@<-.5ex>[lldd]_{p_A} \ar@<.5ex>[rrdd]^{p_B}  \ar@<.5ex>[dd]^{P} &&  \\
&&&&\\
A \ar@<.5ex>[rr]^i  \ar@<-.5ex>[rruu]_{i_A}  &&  D \ar@<.5ex>[rr]^f  \ar@{.>}@<.5ex>[ll]^t  \ar@{.>}@<.5ex>[uu]^{{P}^{-1}}  && B  \ar@<.5ex>[ll]^s  \ar@<.5ex>[lluu]^{i_B}},$$
and add an inverse morphism $P^{-1}: D \rightarrow A \oplus B$ for the morphism $P:A \oplus B \rightarrow D$ defined by
$$P= p_A i + p_B s$$
to our theory.

Here, $p_A,p_B,i_A,i_B$ are the canonical projections and injections, and the arrow $t$ has to be ignorred
(to be used later).

\if 0

$$\xymatrix{ & A \oplus B \ar@<-.5ex>[ld]_{p_A} \ar[rd]^{p_B}  \ar[d]^{P} &  \\
A \ar[r]^i  \ar[ru]_{i_A}  &  D \ar[r]^f  \ar@{.>}@<.5ex>[l]^t  \ar@{.>}@<.5ex>[u]^{{P}^{-1}}  & B  \ar@<.5ex>[l]^s},$$
we introduce a split for $j$, that is, we introduce a new letter $e^{\bot}$ such that
$$i t = 1_A, \qquad t i + f s = 1_D.$$

$$\xymatrix{ & A \oplus B \ar[ld]_{F(p_A)} \ar[rd]^{F(p_B)}  \ar[d]^{p_D} &  \\
F(A) \ar[r]^{F(j)} &  F(D) \ar[r]^{F(q)}  \ar@{.>}@<.5ex>[l]^{F(t)}  \ar@{.>}@<.5ex>[u]^{\bf {p_D}^{-1}}  & F(B)  \ar@<.5ex>[l]^{F(s)}},$$
we introduce a split for $j$, that is, we introduce a new letter $e^{\bot}$ such that
$$i \tilde i = 1_A, \qquad i^{-1} i + fs = 1_D.$$

$$\xymatrix{0  \ar[r] & F(A) \ar[d]^{\mbox{id}}  &  F(A) \oplus F(B)  \ar[l]_{p_A}  \ar[d]^{p_D}  \ar[r]^{p_B} &  F(B) \ar[d]^{\mbox{id}}  \ar[r]&  0 \\
0 \ar[r] & F(A) \ar[r]^j  & F(D) \ar[r]^q  \ar@{.>}@<.5ex>[l]^t  \ar@{.>}@<.5ex>[u]^{\bf {p_D}^{-1}}  &  F(B) \ar[r]  \ar@<.5ex>[l]^s  & 0},$$
we introduce a split for $j$, that is, we introduce a new letter $e^{\bot}$ such that
$$i \tilde i = 1_A, \qquad i^{-1} i + fs = 1_D.$$

$$\xymatrix{ & & & A \oplus B \ar[lld]_{p_A} \ar[rrd]^{p_B}  \ar[d]^{p_D} & & & \\
0 \ar[r] & A \ar[rr]^j & & D \ar[rr]^q  \ar@{.>}@<.5ex>[ll]^t  \ar@{.>}@<.5ex>[u]^{\bf {p_D}^{-1}}  & & B \ar[r]  \ar@<.5ex>[ll]^s  & 0},$$
we introduce a split for $j$, that is, we introduce a new letter $e^{\bot}$ such that
$$i \tilde i = 1_A, \qquad i^{-1} i + fs = 1_D.$$

\fi

}
\end{definition}
\fi

Instead of the split exactness axiom in the definition of $GK$ we may use
alternatively the following axiom without difference. 


\begin{lemma}   \label{ses3b}
Instead of introducing the synthetical arrows $P_\cals^{-1}$
in Defintion \ref{defsyn}.(b) and using axiom \ref{gktheory}.(e) we may alternatively introduce the
dotted arrow $t_\cals$ for each split exact sequence (\ref{ses}) and 
the axiomatic relations
$$i t_\cals = 1_A, \qquad t_\cals i + fs = 1_D$$
(as a replacement of Definition \ref{gktheory}.(e)) without changing the definition of $GK$.
\end{lemma}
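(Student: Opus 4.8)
The claim is that two presentations of $GK$ coincide: the original one using synthetical inverses $P_\cals^{-1}$ together with axiom \ref{gktheory}.(e), versus the variant using synthetical splits $t_\cals$ with the relations $i t_\cals = 1_A$ and $t_\cals i + fs = 1_D$. The natural strategy is to show each presentation validates the defining relations of the other, so the two quotient categories of $W$ (enlarged appropriately) have the same relations imposed and are therefore literally the same category. So I would produce, in each direction, an explicit formula expressing the ``foreign'' generator in terms of the native ones and check the native relations force the foreign ones.

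**From splits to the inverse $P_\cals^{-1}$.** Suppose we are in the variant category: for each split exact sequence $\cals$ we have $t_\cals: D \to A$ with $i t_\cals = 1_A$ and $t_\cals i + fs = 1_D$. Define the candidate inverse by the word $Q := i_A t_\cals + i_B f : D \to A \oplus B$, where $i_A, i_B$ are the canonical injections. I would then compute $P_\cals Q$ and $Q P_\cals$ using $P_\cals = p_A i + p_B s$, the native relations $p_A i_A = 1_A$, $p_B i_B = 1_B$, $p_A i_B = 0$, $p_B i_A = 0$ (from additivity, axiom \ref{gktheory}.(b)), and the split identities. One gets $P_\cals Q = i t_\cals \cdot (\text{stuff}) \ldots$ — concretely $Q P_\cals = i_A t_\cals(p_A i + p_B s) + i_B f(p_A i + p_B s)$; using $f i = 0$, $f s = 1_B$, $t_\cals i = 1_D - fs$ one simplifies to $i_A(1_D - fs)\ldots$ Hmm, one must be a little careful: $t_\cals(p_A i + p_B s)$ lives in the wrong place. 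The clean computation is $P_\cals Q = (p_A i + p_B s)(i_A t_\cals + i_B f) = p_A i t_\cals + p_B s f$ (the cross terms vanish), which equals $p_A i t_\cals + p_B s f$; this should reduce to $1_D$ using $i t_\cals + sf = 1_D$ — wait, the split relation is $t_\cals i + fs = 1_D$, whereas here we meet $i t_\cals + sf$, which is the \emph{other} idempotent decomposition, equal to $1_D$ as well because $i, s$ split the sequence. Similarly $Q P_\cals = i_A t_\cals p_A i + i_B f p_B s + (\text{cross terms})$; the cross terms $i_A t_\cals p_B s$ and $i_B f p_A i$ vanish since $t_\cals p_B$ — no, $p_B$ maps into $B$ and $t_\cals$ has source $D$, so these products aren't even composable as written. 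The honest bookkeeping requires threading the $\oplus X$-type identities of Lemma \ref{lemma1} or just being careful that $Q P_\cals: A\oplus B \to A \oplus B$ equals $i_A t_\cals(p_A i + p_B s) \cdot$ — I will organize it so that $Q P_\cals = i_A (t_\cals i) p_A + i_A(t_\cals s)p_B + i_B(f i)p_A + i_B(f s) p_B$, use $t_\cals i = 1_D - fs$... no: apply both sides to the split structure. The upshot is that after substituting $t_\cals i = 1_D - fs$, $fs = 1_B$, $fi = 0$, and the key observation $t_\cals s = 0$ (which follows from $i t_\cals = 1_A$, $i$ mono, $t_\cals i t_\cals = t_\cals$, $t_\cals i = 1_D - fs$, so $t_\cals - t_\cals fs = t_\cals$, giving $t_\cals f = 0$ hence $t_\cals s$ composed appropriately vanishes), everything collapses to $1_{A\oplus B}$. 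So $Q$ is a genuine two-sided inverse of $P_\cals$, and hence in the variant category every $P_\cals$ is invertible; thus axiom \ref{gktheory}.(e) holds.

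**From $P_\cals^{-1}$ to the splits.** Conversely, in the original category define $t_\cals := p_A P_\cals^{-1} : D \to A$. Using $P_\cals P_\cals^{-1} = 1_D$ and $P_\cals^{-1} P_\cals = 1_{A\oplus B}$ together with $P_\cals = p_A i + p_B s$, I would verify $i t_\cals + fs$: compute $f s = 1_B$ trivially, and $i t_\cals = i p_A P_\cals^{-1}$; meanwhile $P_\cals^{-1} = P_\cals^{-1}(p_A i + p_B s) P_\cals^{-1}$ via $P_\cals P_\cals^{-1}=1$... the cleanest route is: $1_D = P_\cals P_\cals^{-1} = (p_A i + p_B s)P_\cals^{-1} = i (p_A P_\cals^{-1}) + s(p_B P_\cals^{-1})$. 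Wait, that's not right dimensionally either — $p_A$ composes after $P_\cals^{-1}$, i.e.\ $p_A P_\cals^{-1}: D \to A$, so $i(p_A P_\cals^{-1}): D \to D$, good; similarly $s(p_B P_\cals^{-1})$. So setting $t_\cals := p_A P_\cals^{-1}$ and noting $p_B P_\cals^{-1} = f$ (this I must check: $p_B P_\cals^{-1}$ and $f$ both are left inverses... actually $f P_\cals = f(p_A i + p_B s) = p_B$ since $fi = 0, fs = 1_B$, so $f = p_B P_\cals^{-1}$), we get $1_D = i t_\cals + s f$ — and again $sf = fs$ as idempotents complementary to $t_\cals i$ give the stated form $t_\cals i + fs = 1_D$ after one more manipulation. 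And $i t_\cals = i p_A P_\cals^{-1}$, while $1_A = p_A P_\cals^{-1} P_\cals i_A \ldots$; more directly $t_\cals i = p_A P_\cals^{-1} i = p_A P_\cals^{-1} P_\cals i_A = p_A i_A = 1_A$ using $i = P_\cals i_A$ (check: $P_\cals i_A = (p_A i + p_B s)i_A = i$ since $p_A i_A = 1, p_B i_A = 0$). So $t_\cals i = 1_A$, which together with the previous display yields exactly the split axioms.

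**Main obstacle.** There is no deep obstacle; the content is entirely the elementary manipulation of the idempotents $i t_\cals, sf$ (resp.\ $t_\cals i, fs$) inside a preadditive category and checking composability/placement of all factors. The one genuinely delicate point is bookkeeping: in $W$ everything is formal, so I must be scrupulous that each product is between composable arrows and that I only ever invoke relations already imposed — additivity \ref{gktheory}.(b) for the orthogonality identities $p_A i_B = 0$ etc., and nothing else. A secondary subtlety is making sure the passage is at the level of \emph{presentations}: I should phrase it as ``the set of relations generated by \ref{gktheory}.(a)--(e) and the set generated by (a)--(d) plus the split-relations define the same congruence on $W$,'' which follows once each generator of one presentation is shown to be a word in the other satisfying the other's relations — so the two quotients receive surjections from the same free object with the same kernel. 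I expect the write-up to be short, essentially the two paragraphs of computation above made precise.
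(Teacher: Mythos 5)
Your route is in substance the paper's own: the paper proves Lemmas \ref{ses3b} and \ref{ses3} together via exactly the translation formulas you propose, $t_\cals \mapsto P_\cals^{-1}p_A$ and $P_\cals^{-1}\mapsto t_\cals i_A + f i_B$ (the paper composes words left to right, so its $t_\cals i_A+f i_B$ is your $i_A\circ t_\cals+i_B\circ f$ and its $P_\cals^{-1}p_A$ is your $p_A\circ P_\cals^{-1}$), together with the auxiliary identity $s t_\cals=0$, i.e.\ your $t_\cals\circ s=0$. However, one step of your verification is argued inadmissibly: where you ``meet $i\circ t_\cals+s\circ f$'' and justify that it equals $1_D$ ``because $i,s$ split the sequence,'' you are appealing to properties of genuine splittings, which is not available — $t_\cals$ is a formal generator and in the quotient of $W$ you may use only the imposed relations. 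You are rescued purely by the convention: read left to right, the imposed relation $t_\cals i+fs=1_D$ \emph{is} the identity $i\circ t_\cals+s\circ f=1_D$ you need (read in your order it would not even type-check), so there is no ``other idempotent decomposition'' to worry about; the same convention mix-up also muddles your derivation of $t_\cals\circ s=0$ (``$i$ mono,'' ``$t_\cals f=0$''). The clean argument is $t_\cals=t_\cals\circ(i\circ t_\cals+s\circ f)=t_\cals+t_\cals\circ s\circ f$, hence $t_\cals\circ s\circ f=0$, and composing with $s$ and using $f\circ s=1_B$ gives $t_\cals\circ s=0$; the paper does the mirror computation for $st_\cals$.

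The second, more structural gap is in your concluding reduction. Showing that each presentation's foreign generator is expressible by a word satisfying the foreign relations only produces functors $\Phi:GK\to GK'$ and $\Psi:GK'\to GK$; it does not by itself show the two quotients coincide, since two-way maps between presented objects need not be mutually inverse. One must also verify the round-trip identities $\Psi\Phi(P_\cals^{-1})=P_\cals^{-1}$ and $\Phi\Psi(t_\cals)=t_\cals$, i.e.\ (in the paper's notation) $(P_\cals^{-1}p_A)i_A+f i_B=P_\cals^{-1}$ and $(t_\cals i_A+f i_B)p_A=t_\cals$. These are one-line checks — $(P_\cals^{-1}p_A)i_A+f i_B=P_\cals^{-1}(p_A i_A+P_\cals f i_B)=P_\cals^{-1}(p_A i_A+p_B i_B)=P_\cals^{-1}$ since $P_\cals f=p_B$, and $(t_\cals i_A+f i_B)p_A=t_\cals$ by the orthogonality relations — and the paper performs them explicitly; your outline omits them and asserts the conclusion ``follows once each generator of one presentation is shown to be a word in the other satisfying the other's relations,'' which is not a valid inference as stated. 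With these two repairs your proposal coincides with the paper's proof.
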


\if 0
\begin{definition}
{\rm
The {\em alternative split-exactnes} says that
for every split exact sequence (\ref{ses}) 
we require a synthetical morphism $t_\cals: D \rightarrow A$ as indicated in the diagram (\ref{ses2})
and we require the relations
$$i t_\cals = 1_A, \qquad t_\cals i + fs = 1_D$$
in the definition of $GK$.
}
\end{definition}
\fi

It would not make any difference in the definition of $GK$
if we added both $P_{\cals}^{-1}$ and $t_\cals$ simultaneously,
because they automatically define each other as follows in $GK$:

\begin{lemma}   \label{ses3}
$P_\cals^{-1}$ and $t_\cals$ of diagram (\ref{ses2}) define each other as follows:
$$t_\cals = P_\cals^{-1} p_A , \quad  \quad  P_\cals^{-1} = t_\cals i_A + f i_B$$

\end{lemma}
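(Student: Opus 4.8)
The plan is to treat Lemma~\ref{ses3} as a purely formal computation inside $GK$, using that in the situation considered \emph{both} synthetical arrows are present together with all their defining relations: the two-sided invertibility of $P_\cals=p_Ai+p_Bs$ (axiom \ref{gktheory}.(e)), the identities $it_\cals=1_A$ and $t_\cals i+fs=1_D$ of Lemma~\ref{ses3b}, the additivity relation $p_Ai_A+p_Bi_B=1_{A\oplus B}$ together with the orthogonality relations $i_Ap_A=1_A$, $i_Bp_B=1_B$, $i_Ap_B=0$, $i_Bp_A=0$ for the canonical maps of $A\oplus B$, and the relations $sf=1_B$ (split) and $if=0$ (exactness) from the sequence $\cals$. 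Throughout I keep the left-to-right composition convention $fg=g\circ f$ fixed in Section~\ref{section3}.

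First I would record two auxiliary identities. Expanding $P_\cals$ and applying the orthogonality relations of $A\oplus B$ gives, in $GK$,
\[ i_A P_\cals = i_Ap_Ai + i_Ap_Bs = i,\qquad i_B P_\cals = i_Bp_Ai + i_Bp_Bs = s. \]
Postcomposing each of these with $P_\cals^{-1}$ and using $P_\cals P_\cals^{-1}=1_{A\oplus B}$ yields $iP_\cals^{-1}=i_A$ and $sP_\cals^{-1}=i_B$. Now substitute the relation $t_\cals i+fs=1_D$:
\[ P_\cals^{-1} = (t_\cals i+fs)P_\cals^{-1} = t_\cals\,(iP_\cals^{-1}) + f\,(sP_\cals^{-1}) = t_\cals i_A + f i_B, \]
which is the second claimed formula. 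Postcomposing this with $p_A$ and using $i_Ap_A=1_A$, $i_Bp_A=0$ gives $P_\cals^{-1}p_A=t_\cals i_Ap_A+f i_Bp_A=t_\cals$, the first formula.

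For completeness I would also verify the reverse consistency, i.e.\ that starting from a given $P_\cals^{-1}$ and setting $t_\cals:=P_\cals^{-1}p_A$ recovers the relations of Lemma~\ref{ses3b}: the retraction relation is $it_\cals=iP_\cals^{-1}p_A=i_Ap_A=1_A$, and for the splitting relation one expands $1_D=P_\cals^{-1}P_\cals=P_\cals^{-1}p_Ai+P_\cals^{-1}p_Bs=t_\cals i+P_\cals^{-1}p_Bs$ and notes, again from the formula $P_\cals^{-1}=t_\cals i_A+fi_B$, that $P_\cals^{-1}p_B=t_\cals i_Ap_B+fi_Bp_B=f$, so $P_\cals^{-1}p_Bs=fs$ and hence $t_\cals i+fs=1_D$.

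I do not expect a real obstacle: the statement is entirely formal bookkeeping. The only point requiring genuine care is respecting the composition order $fg=g\circ f$ so that each cancellation ($i_Ap_A=1_A$, $iP_\cals^{-1}=i_A$, etc.) is applied on the correct side; and it is worth noting that the alternative, more symmetric route — verifying directly that $t_\cals i_A+fi_B$ is a two-sided inverse of $P_\cals$ — would additionally require the relation $st_\cals=0$, which is \emph{not} one of the axioms but must be derived from $it_\cals=1_A$ and $t_\cals i+fs=1_D$ (precompose the latter with $s$, use $sf=1_B$ to get $st_\cals i=0$, then postcompose with $t_\cals$). Routing the argument through $iP_\cals^{-1}=i_A$ and $sP_\cals^{-1}=i_B$ as above avoids this and keeps the proof to a few lines.
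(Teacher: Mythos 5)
Your argument is correct, and it is organized differently from the paper's. The paper proves Lemmas \ref{ses3b} and \ref{ses3} in one stroke: it \emph{posits} the two formulas as transformation rules for a pair of functors $\Phi\colon GK\to GK'$ and $\Psi\colon GK'\to GK$ between the two axiomatizations (the one with $P_\cals^{-1}$ and the one with $t_\cals$), then checks well-definedness --- i.e.\ that $t_\cals i_A+fi_B$ is a two-sided inverse of $P_\cals$ (which is exactly the ``symmetric route'' you flag, and the paper indeed first derives $st_\cals=0$ for it), and that $P_\cals^{-1}p_A$ satisfies $i\,t_\cals=1_A$, $t_\cals i+fs=1_D$ --- and finally that $\Phi$ and $\Psi$ are mutually inverse. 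You instead work inside the single category where both synthetical arrows and all their relations are present and \emph{derive} the formulas: the auxiliary identities $i_AP_\cals=i$, $i_BP_\cals=s$, hence $iP_\cals^{-1}=i_A$, $sP_\cals^{-1}=i_B$, then $P_\cals^{-1}=(t_\cals i+fs)P_\cals^{-1}=t_\cals i_A+fi_B$ and $t_\cals=P_\cals^{-1}p_A$. This is shorter for Lemma \ref{ses3} as stated, avoids $st_\cals=0$ altogether, and in fact only needs $t_\cals i+fs=1_D$ plus invertibility of $P_\cals$; what it does not give (and does not claim to give) is Lemma \ref{ses3b}, i.e.\ that the two axiomatizations define the same category, which is the extra content the paper's functorial argument carries. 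One small caveat: in your ``reverse consistency'' paragraph you invoke $P_\cals^{-1}=t_\cals i_A+fi_B$ to get $P_\cals^{-1}p_B=f$; if the point of that paragraph were to recover the $t_\cals$-relations from the $P_\cals^{-1}$-axiomatization alone (as in the paper's check of $\Psi$), you should instead derive $P_\cals^{-1}p_B=f$ directly from $P_\cals f=p_A(if)+p_B(sf)=p_B$, using $if=0$ and $sf=1_B$. Within your stated framing (both arrows present) the paragraph is harmless but redundant.
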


\begin{proof}[Proof of Lemmas \ref{ses3b} and \ref{ses3}]
Let $GK$ be the category with the usual split exactness axiom involving $P_\cals$, and $GK'$ the category with the alternative split exactness
axiom involving $t_\cals$. Let $\Phi:GK \rightarrow GK'$ and $\Psi: GK' \rightarrow GK$ be the functors which are identical on $C^*$
and on the synthetical inverses of corner embeddings,
and according to the `transformation' rules defined to be
$$\Phi(P_\cals^{-1})= t_\cals i_A + f i_B, \quad \Psi(t_\cals)= P_\cals^{-1} p_A$$
for each split exact sequence $\cals$.

We remark that $s t_\cals = 0$ because $s t_\cals = s t_\cals i t_\cals = s (1- fs) t_\cals = 0$.
To see that $\Phi$ is 
well-defined we compute
$$\Phi(P_\cals) \Phi(P_\cals^{-1}) = (p_A i + p_B s)(t_\cals i_A + f i_B)= 1_{A \oplus B}, \quad \Phi(P_\cals^{-1})\Phi(P_\cals) = 1_D.$$
To show that $\Psi$ is well-defined we calculate
$$\Psi (i) \Psi(t_\cals)= i P_\cals^{-1} p_A = i_A p_A i P_\cals^{-1} p_A = i_A (P_\cals - p_B s) P_\cals^{-1} p_A
= i_A p_A = 1_A,$$
$$\Psi(t_\cals) \Psi(i) + \Psi(f) \Psi(s) = P_\cals^{-1} p_A i + fs
= P_\cals^{-1} ( P_\cals  - p_B s +  P_\cals fs) = 1_D.$$

That $\Psi$ and $\Phi$ are inverses to each other follows then from the observation
$$\Psi \circ \Phi(P_\cals^{-1}) = P_\cals^{-1} (p_A i_A + P_\cals f i_B)= P_\cals^{-1}, \quad  \Phi \circ \Psi(t_\cals) = t_\cals.$$

\if 0
We notice that $s \tilde i = 0$ because $s \tilde i = s \tilde i i \tilde i = s (1-fs) \tilde i = 0$.
We use this to compute in $W$
$$\Phi(\vartheta) \Phi(\vartheta^{-1}) = (p_1 i + p_2 s)(i^{-1} i_1 + f i_2)= 1_{A \oplus B}$$
and $\Phi(\vartheta^{-1})\Phi(\vartheta) = 1_D$.
This shows that $\Phi$ is well defined.
To show that $\Psi$ is well defined we calculate in $Y$
$$\Psi (i) \Psi(\tilde i)= i \vartheta^{-1} p_1 = i_1 p_1 i \vartheta^{-1} p_1 = i_1 (\vartheta - p_2 s) \vartheta^{-1} p_1
= i_1 p_1 = 1_A,$$
$$\Psi(\tilde i) \Psi(i) + \Psi(f) \Psi(s) = \vartheta^{-1} p_1 i + fs
= \vartheta^{-1} ( \vartheta  - p_2 s +  \vartheta fs) = 1_D.$$

Brauche $s i^{-1} = 0$. Brauche $i i^{-1}= 1_A$.

The last claim follows from 

Betrachte auch $i_1 \vartheta = i$.
\fi
\end{proof}

We remark that we have also shown in the last proof that $s t_\cals =0$.
(That shows even more more clearly that $D \cong A \oplus B$ in $GK$.)
Again, the element $t_\cals$ is uniquely defined by its defining
relations. Also, Lemma \ref{lemma1} would hold if we had
introduced $t_\cals$ instead of $P_\cals^{-1}$.
All these follows immediately as a corollary from the formula
$t_\cals = P_\cals^{-1} p_A$ of Lemma \ref{ses3}.

\if 0
We also remark that instead of considering all split exact sequences
in the definition of split exactness in the definition of $GK$
it is enough to consider only split exact sequences of the form
\begin{equation}
\cals: \xymatrix{
0 \ar[r] & \mbox{ker}(p) \ar[r]^i  & D \ar@<.5ex>[r]^f  &  \mbox{im}(p) \ar[r]  \ar@<.5ex>[l]^s  & 0}
\end{equation}
for endomorphisms $p \in C^*(D,D)$, where $f$ is as a map identically defined as $p$,
$s$ is the identity embedding, and consequently $p = f s$.
That is, it is enough to consider endomorphisms instead of
split exact sequences.
\fi


\if 0
Clearly $P^{-1}$ is uniquely defined, but this is also true for $t$:
\begin{lemma}
We remark that the $t$ of the diagram with relations as described in the alternative split-exactness procedure is uniquely defined and $s t = 0$.
\end{lemma}
\fi

\if 0
\begin{definition}
{\rm
Consider the {\em idempotent property}, saying that for every
idempotent endomorphism $p \in C^*(A,A)$ we introduce a synthetical arrow $p^\bot :A \rightarrow \ker(p)$
and the relations
\begin{equation}    \label{e2}
i p^\bot = 1_{\ker(p)}, \qquad p^\bot i + p = 1_A
\end{equation}
for the inclusion map $i:\ker(p) \rightarrow A$.
}
\end{definition}
\fi

\if 0
\begin{lemma}
The idempotent property is equivalent to split exactness.
\end{lemma}
\fi

\if 0
\begin{lemma}
\begin{itemize}

\item
The element $p^\bot$ is uniquely defined, in other words, no other element different to $p^\bot$
can satisfy the identities.

\item
One has $p p^\bot =0$.

\item
Any sum of words can be written as a single word. Thus every element may be presented as a difference $x-y$ of two words $x,y$.
\end{itemize}

\end{lemma}

All these follows immediately from the formula
$t_\cals = P_\cals^{-1} p_A$ of Lemma \ref{ses3}.

\begin{proof}
(b) $p p^\bot = p p^\bot i p = p (1-p) p^\bot = 0$.
(a) Let $x$ be another element satisfying the same identites as $p^\bot$ does  .
Then $p^\bot 1_{\ker(p)} = p^\bot i p^\bot = p^\bot i x$. Hence $(1_A -p) p^\bot =(1_A - p) x$.
By (b), $p^\bot = x$.

(c)
The proof is similar as the proof of Lemma \ref{lemma1}.
In Lemma \ref{lemma1} we showed that $(\vartheta,\mbox{id})^{-1}$ can be presented by formula (\ref{varthetasum}).
Similarly, given an idempotent homomorphism $p \in C^*(A,A)$, we can check that
the idempotent homomorphism $(p.\mbox{id}):A \oplus B \rightarrow A \oplus B$ satisfies
\begin{equation}  \label{e1}
(p,\mbox{id})^\bot = p_1 p^\bot i_1 + p_2 i_2
\end{equation}
for some obviously defined projections $p_k$ and injections $i_k$, because
the right hand side of (\ref{e1}) satisfies the same analogous relations (\ref{e2}) as $(p,\mbox{id})^\bot$ does and so by uniqueness
of item (a) we get identity (\ref{e1}).
One 
proceeds then analogously as in the proof of Lemma \ref{lemma1}.

\end{proof}
\fi

We can always move the inverse $c^{-1}$ of a corner embedding
$c \in C^*$ to the right in a word:

\begin{lemma}  \label{lemma2}
If $f$ is a morphism in $C^*$, $c$ a corner embedding and the composition $c^{-1} f$ 
admissible,
then there exists a corner embedding $c'$ and a morphism
$f'$ in $C^*$ 
such that $c^{-1} f = f' c'^{-1}$.
(Analogously, $c^{-1} t_\cals = t_{\cals'} c'^{-1}$. Similarly, $c^{-1} P_\cals^{-1} = P_{\cals'}^{-1} \varphi^{-1} c'^{-1}$,
where $\varphi$ is the canonical isomorphism $(A \oplus B) \otimes \calk \rightarrow A \otimes \calk \oplus B \otimes \calk$.)
\end{lemma}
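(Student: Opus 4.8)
The plan is to exhibit $c'$ and $f'$ concretely and then verify $c^{-1}f = f'c'^{-1}$ purely by manipulating the relations in $GK$. Say $c \in C^*(A, A \otimes \calk)$ is the corner embedding $c(a) = a \otimes e$, and $f \in C^*(C, A)$, so that $c^{-1}f : C \to A$ is the composition we must rewrite. I would first form the corner embedding $c' \in C^*(C, C \otimes \calk)$, $c'(x) = x \otimes e$, using the \emph{same} one-dimensional projection $e$. Then I would set $f' := (f \otimes \mathrm{id}_\calk) : C \otimes \calk \to A \otimes \calk$, which is a genuine $G$-equivariant $*$-homomorphism (here one must check $G$-equivariance of $f \otimes \mathrm{id}_\calk$; it is, since $f$ is equivariant and the $\calk$-factor carries whatever action makes $c$ and $c'$ corner embeddings in $C^*$ — note the flexibility in the $G$-action on $A \otimes \calk$ granted by Definition \ref{defsyn}.(a), and one picks the action on $C \otimes \calk$ to match). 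The key commuting square is the obvious naturality of the corner embedding,
\begin{equation}  \label{eq:cornersquare}
(f \otimes \mathrm{id}_\calk) \circ c' = c \circ f \qquad \text{in } C^*,
\end{equation}
i.e. $f' c' = c f$ as composable morphisms of $C^*$ (reading left-to-right in $GK$'s word convention this is $c' f' = f c$, so let me just write it as the commuting square $x \otimes e \mapsto f(x) \otimes e$). Since the canonical functor $C^* \to GK$ sends this to a valid relation by axiom \ref{gktheory}.(a), and since $c, c'$ become invertible in $GK$ by axiom \ref{gktheory}.(d), I may compose \eqref{eq:cornersquare} on the appropriate sides by $c^{-1}$ and $c'^{-1}$ to obtain $c^{-1} f = f' c'^{-1}$ in $GK$. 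That is the whole argument; it is essentially a diagram chase using only that a commuting square of honest morphisms stays commuting in $GK$ and that corner embeddings are invertible there.

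For the parenthetical cases I would argue identically. For $c^{-1} t_\cals$ with $\cals$ the sequence \eqref{ses}, tensoring $\cals$ on the right by $\calk$ (i.e. applying $- \otimes \calk$ to $i, f, s$, or more precisely forming the split exact sequence $\cals'$ obtained by taking corners) produces a split exact sequence $\cals'$ together with corner embeddings intertwining $t_\cals$ and $t_{\cal S'}$; naturality of the splitting $t$ under the corner embedding gives a commuting square, and inverting the corner embeddings in $GK$ yields $c^{-1} t_\cals = t_{\cals'} c'^{-1}$. One should note that by Lemma \ref{ses3} it suffices to handle either $t_\cals$ or $P_\cals^{-1}$, since each is built from the other together with honest morphisms and (for $P_\cals^{-1}$) the projections and inclusions, which commute past corner embeddings by the square \eqref{eq:cornersquare} again. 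For the $P_\cals^{-1}$ statement the only new ingredient is the canonical isomorphism $\varphi : (A \oplus B) \otimes \calk \xrightarrow{\ \sim\ } A \otimes \calk \oplus B \otimes \calk$, needed because the corner embedding of $D$ lands in $D \otimes \calk$ while $P_{\cals'}^{-1}$ is defined out of $D \otimes \calk$ into $(A\otimes\calk) \oplus (B \otimes \calk)$; inserting $\varphi^{-1}$ (a $*$-isomorphism, hence invertible in $GK$ via homotopy invariance or directly) reconciles the two and gives $c^{-1} P_\cals^{-1} = P_{\cals'}^{-1} \varphi^{-1} c'^{-1}$.

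I do not anticipate a serious obstacle; the lemma is a bookkeeping statement. The one point that needs a little care is the $G$-equivariance of the maps $f' = f \otimes \mathrm{id}_\calk$ and of the square \eqref{eq:cornersquare}: because Definition \ref{defsyn}.(a) allows an arbitrary $G$-action on $A \otimes \calk$ (not just the diagonal one), one must fix, once and for all, the action on $C \otimes \calk$ that makes $c'$ a legitimate corner embedding and simultaneously makes $f \otimes \mathrm{id}_\calk$ equivariant into $(A \otimes \calk, \text{given action})$ — this is possible precisely because $c$ being a corner embedding pins down the action on $A \otimes \calk$ up to the choice of $e$, and pulling it back along $f \otimes \mathrm{id}$ is consistent. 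The remaining verifications — that the tensored sequence $\cals'$ is again split exact in $C^*$, and that $\varphi$ intertwines everything — are routine and I would state them without detailed computation.
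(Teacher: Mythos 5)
Your proposal is essentially the paper's own proof: the naturality square $c\,(f\otimes\mathrm{id}_\calk)=f\,c'$ for corner embeddings pushed into $GK$ by functoriality and composed with the synthetical inverses, the $P_\cals^{-1}$ case obtained by tensoring $\cals$ with $\calk$ (exactness/nuclearity of $\calk$ being the routine point you defer) and inserting the canonical isomorphism $\varphi$, and $t_\cals$ reduced to $P_\cals^{-1}$ via $t_\cals=P_\cals^{-1}p_A$ of Lemma \ref{ses3}. One bookkeeping slip: with your labels ($f\in C^*(C,A)$, $c$ the corner embedding of $A$, $c'$ that of $C$) the word $c^{-1}f$ is not admissible, and inverting your square $c'f'=fc$ actually gives $c'^{-1}f=(f\otimes\mathrm{id}_\calk)\,c^{-1}$, i.e. the lemma with the roles of $c$ and $c'$ interchanged -- harmless, but the statement requires $f$ to start at $A$, so relabel accordingly.
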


\if 0
\begin{lemma}  \label{lemma2}
We can always move the inverse $c^{-1}$ of a corner embedding
$c \in C^*$ to the right in a word.
More prescisely, if $f$ is a homomorphism in $C^*$
then there exists a corner embedding $c'$ and a homorphism
$f'$ in $C^*$ 
such that $c^{-1} f = f' c'^{-1}$.
(Analogously $c^{-1} t_\cals = t_{\cals'} c'^{-1}$ and $c^{-1} d^{-1} = d'^{-1} c'^{-1}$ for the synthetical letters
$t_\cals$ and $d^{-1}$.)

%
\end{lemma}
\fi

\begin{proof}
This follows from the commutation relation $c (f \otimes \mbox{id}_\calk) = f c'$ for the corner embeddings $c:A \rightarrow A \otimes \calk$ and $c':B \rightarrow B \otimes \calk$ and a morphism $f:A \rightarrow B$.
The case $P_\cals^{-1}$ is analog: since $\calk$ is an exact $C^*$-algebra we can tensor the diagrams (\ref{ses}) and (\ref{ses2}) with $\calk$,
then check $P_\cals c = c' \varphi P_{\cals'}$, where $c:D \rightarrow D \otimes \calk$, $c':A \oplus B \rightarrow (A \oplus B) \otimes \calk$
and $\cals' = \cals \otimes \calk$ (also with additivity, Definition \ref{gktheory}.(b)).
The case $t_\cals$ follows from 
that and $t_\cals = P_\cals^{-1} p_A$ of Lemma \ref{ses3}.
\end{proof}

\if 0
Because of the disturbing $\varphi$ in the last lemma, and because it may disturb otherwise, we call now a composition
$c \varphi$ of a corner embedding $c$ and any isomorphism $\varphi \in C^*$ a {\em corner embedding}.
In particular also composition of corner embeddings are now corner embeddings.
\fi

A drastical simplification of morphisms in $GK$ goes by the Cuntz picture:

\begin{proposition}    \label{propmain}
Let $G$ be a locally compact second-countable group or a countable inverse semigroup and
the category $C^*$ be restricted to separable $C^*$-algebras.

Every morphism $z$ in $GK$ may be written in the form
$$z=(a d^{-1} - b) \cdot t_\cals e t_\calt \cdot 
c^{-1}$$
for some homomorphisms $a,b \in C^*$, some split exact sequences
$\cals$ and $\calt$, and some corner embeddings $c,d,e \in C^*$.

If the morphism $z$ is in $GK(A,B)$ and $B$ is unital we can omit $t_\calt$ (i.e. $t_\calt = 1$).
If $G$ is the trivial
group then $d^{-1}$ and $e$ can be omitted (i.e. $d^{-1}=e=1$).
Both simplifications can be combined simultaneously.

\end{proposition}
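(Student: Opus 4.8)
The plan is to derive the normal form from the Cuntz picture of $KK^G$-theory \cite{cuntz1984,cuntzpicture} via the identification $GK\cong KK^G$ of Proposition~\ref{propiso}, translating a quasi-homomorphism into a $GK$-word and then cleaning it up with Lemmas~\ref{lemma1}, \ref{ses3}, \ref{ses3b} and~\ref{lemma2}.

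\emph{Reducing to a quasi-homomorphism.} By the Cuntz picture, $z\in GK(A,B)=KK^G(A,B)$ is represented by a $G$-equivariant quasi-homomorphism: a pair of $G$-equivariant $*$-homomorphisms $\phi_\pm$ from a ``$q$-type'' algebra $Q$ --- which is $GK$-equivalent to $A$ --- into a separable $G$-$C^*$-algebra $D$ containing $B\otimes\calk_G$ as a $G$-invariant ideal, with $\phi_+-\phi_-$ taking values in $B\otimes\calk_G$. (For $G$ trivial this is Cuntz's $[[qA,B\otimes\calk]]$; for a group one uses the equivariant version, where the stabilising algebra $\calk_G\cong\calk$ carries a possibly non-diagonal $G$-action --- which is exactly why Definition~\ref{defsyn}.(a) permits an arbitrary $G$-action on the target of a corner embedding; for a countable inverse semigroup the same picture is available since $GK$ is characterised by the universal property on which the construction rests.) Taking the pullback $\widehat D=\{(x,y)\in D\oplus D: x-y\in B\otimes\calk_G\}$ of $D$ with itself over $D/(B\otimes\calk_G)$ yields a genuinely split exact sequence $\cals_0:0\to B\otimes\calk_G\to\widehat D\to D\to 0$ (split by the diagonal), and the quasi-homomorphism assembles into a single $G$-equivariant $*$-homomorphism $\psi:Q\to\widehat D$ whose class is
\[
z\;=\;\bigl(\text{$GK$-equivalence }A\to Q\bigr)\cdot\psi\cdot t_{\cals_0}\cdot c^{-1},
\]
with $c^{-1}:B\otimes\calk_G\to B$ the de-stabilisation and $t_{\cals_0}$ as in Lemma~\ref{ses3b}. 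That this $GK$-morphism equals the class of the quasi-homomorphism is the standard compatibility of the Cuntz picture with split-exact functors, verified from the defining relations of $t_{\cals_0}$ together with the identity $s\,t_{\cals_0}=0$ noted after Lemma~\ref{ses3}.

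\emph{Bringing it to the normal form.} The $GK$-equivalence $A\to Q$ is itself, up to one corner embedding coming from the $\calk_G$-stabilisation, of the form $(\iota_1-\iota_2)\,t_{\cals_1}$ for a split exact sequence $\cals_1$ of a free-product type algebra split by the first inclusion $\iota_1$. To remove the $*$-homomorphism $\psi$ that still sits between $t_{\cals_1}$ and the rest, push out $\cals_1$ along $\psi$: since $\cals_1$ is split, the resulting sequence $\cals$ is again split exact, and if $g$ denotes the pushout $*$-homomorphism one checks, by precomposing with the inclusion and with the splitting and using the defining relations of $t$, that $t_{\cals_1}\psi=g\,t_\cals$ in $GK$. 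Substituting and distributing $(\iota_1-\iota_2)$ over $g$ puts $z$ in the form $(a-b)\,t_\cals\,e\,t_\calt\,c^{-1}$, where $a,b$ are $G$-equivariant $*$-homomorphisms, $e$ is the corner embedding carrying the $\calk_G$-stabilisation (moved into place via the homotopy-invariance relation~\ref{gktheory}.(c)), and $t_\calt$ is the split exactness used in forming $\widehat D$. Throughout, Lemma~\ref{lemma1} collapses to differences every sum produced by a pushout or by the additivity relation, and Lemma~\ref{lemma2} slides each inverse corner embedding to the right-hand end so that only the single $c^{-1}$ survives there; absorbing one further corner embedding $d$ on the source side (again from the $\calk_G$-bookkeeping) gives $(ad^{-1}-b)\,t_\cals\,e\,t_\calt\,c^{-1}$. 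When $G$ is trivial no $\calk_G$ occurs, so $d=e=1$; when $B$ is unital the auxiliary algebra is already unital and the $\widehat D$-step is unnecessary, so $t_\calt=1$; these two simplifications concern disjoint ingredients and so combine.

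\emph{Main obstacle.} The real work is the equivariant and non-unital bookkeeping: verifying that the equivariant Cuntz picture (for inverse semigroups, via the universal property of $GK$) represents every morphism, and then checking that the translation dictionary --- pullback for split exactness, pushout for absorbing $*$-homomorphisms, corner embeddings for the $\calk_G$-stabilisations, rotation homotopies for repositioning --- assembles into an expression that is \emph{exactly} of the stated shape, with no spurious synthetical morphism created. Each individual step is elementary, but they must be organised carefully, in particular to confirm that the asserted simplifications in the trivial-group and unital cases really do take place.
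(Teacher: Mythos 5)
Your overall strategy (pass through the Cuntz quasi-homomorphism picture via Proposition \ref{propiso} and then normalize the resulting $GK$-word with Lemmas \ref{lemma1}--\ref{lemma2}) is the same as the paper's, but the decisive steps are asserted rather than derived, and two of them are wrong or unjustified as stated. First, the central identity $z=(\mbox{equivalence }A\to Q)\cdot\psi\cdot t_{\cals_0}\cdot c^{-1}$ is exactly the content that must be proved; the paper obtains its analogue not as a ``standard compatibility'' but by tracking the explicit natural transformation from the proof of the universal property, $\hat G(z)=\xi_B(z)=\Psi'_z(1_{GK(A,A)})$, through the formulas of \cite{burgUniversalKK}, which yields concretely $(s_+S_{u,A}T_{u,A}^{-1}-s_-)\,t_\cals\,T_{u_-,A}S_{u_-,A}^{-1}\,t_\calt\,c_B^{-1}$. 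Second, your ``push out $\cals_1$ along $\psi$'' step is not available: pushouts of split extensions along arbitrary ($G$-equivariant) $*$-homomorphisms do not exist in the category of $C^*$-algebras; one needs instead a pullback-type mapping cylinder (this is precisely the role of the algebra $A_x=\{A\oplus\calm(B)\mid \varphi_+(a)=m \bmod B\}$ with its two splittings $s_\pm$ in the paper), so the identity $t_{\cals_1}\psi=g\,t_\cals$ has no receiving sequence $\cals$ in your setup.

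Third, your explanations of the two simplifications do not match your own construction. You attribute $d^{-1}$ and $e$ to ``$\calk_G$-stabilisation bookkeeping'' and drop them when $G$ is trivial; but the stabilization is still present for trivial $G$ (the final $c^{-1}$ survives), so this accounting cannot be right. In the paper $d^{-1}$ and $e$ come from the cocycle conjugations $u_\#=F(T_{u,A})^{-1}\circ F(S_{u,A})$ attached to the cocycles $u_\pm$ of the equivariant Cuntz picture, and they disappear for trivial $G$ because then all cocycles are $u=1$; your picture has no cocycle mechanism at all (you fold equivariance into a non-diagonal action on $\calk_G$), so the precise placement of $d^{-1}$ inside the bracket and of $e$ between the two $t$'s is never produced. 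Likewise, your $t_\calt$ is the pullback ($\widehat D$) sequence, which encodes the quasi-homomorphism and cannot be omitted merely because $B$ is unital; in the paper $t_\calt$ comes from the unitization sequence $0\to B\otimes\calk\to B^+\otimes\calk\to C^*(E)\otimes\calk\to 0$ used to define $\Psi'_z$, and it is exactly this step that is skipped for unital $B$. To repair your argument you would need the equivariant (cocycle) Cuntz picture of \cite{burgUniversalKK}, replace the pushout by a pullback construction, and verify the claimed identity for $\psi$ rather than assume it.
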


\begin{proof}
By the universal property of $KK^G$ and $GK$ there is an isomorphism
of categories $\hat G:KK^G \rightarrow GK$, see Proposition \ref{propiso}.
The idea is now to keep track of the formulas appearing in the proof of this fact and see how a morphism
$z\in KK^G$ is presented as $\hat G(z)$ in $GK$.
%
The original proof of the universal property of $KK$ is by Cuntz \cite{cuntz1984} and Higson
\cite{higson}, and by Thomsen \cite{thomsen} in the group equivariant setting for $KK^G$.
We shall refer here
to our exposition in the inverse semigroup equivariant setting \cite{burgUniversalKK}. 
All we shall do here may be read verbatim topological group equivariantly.




Let us be given fixed objects $A,B \in C^*$.
Assume at first that $B$ is stable, i.e. $B \cong B \otimes \calk$ in $C^*$
($\calk$ equipped with the trivial $G$-action).

In \cite[Theorem 8.5]{burgUniversalKK}, there is stated an
isomorphism
$$\Phi:\F^G(A,B) \rightarrow KK^G(A,B).$$
Here, $\F^G(A,B)$ is just the Cuntz-picture of $G$-equivariant
$KK$-theory by quasi homomorphisms and $G$-cocycles,
see \cite[Def. 7.1 and Def. 7.8]{burgUniversalKK}.
To recall it, an element $x=[\varphi_+,\varphi_-,u_+,u_-] \in
\F^G(A,B)$ is given by two $G$-equivariant $*$-homomorphisms
$\varphi_\pm:A \rightarrow \calm(B)$ and 
two $\alpha$-cocycles 
$u_\pm : G \rightarrow \calm(B)$,
see \cite[Def. 5.1]{burgUniversalKK}.

One has two split-exact sequences (for $+$ and $-$)
$$\cals: \xymatrix{ 0 \ar[r] & B \ar[r]^j & A_x \ar@<.5ex>[r]^p & A \ar[r] \ar@<.5ex>[l]^{s^{\pm}}& 0}$$
for $A_x:=\{A \oplus \calm(B)|\, \varphi_+(a) = m \mod B\}$
by \cite[Def. 9.1 and 9.4]{burgUniversalKK}.

Define the split-exact, homotopy invariant, stable functor
$F$ from $C^*$ to the abelian groups by
$$F(B)= GK(A,B) \mbox{ and } F(f:B \rightarrow C):GK(A,B) \rightarrow GK(A,C):z \mapsto z f.$$

For an $\alpha$-cocycle $u \in \calm(A)$,
recall
\cite[Def. 5.4, 6.1 and 6.2]{burgUniversalKK}
for the definition of an abelian group isomorphism
$$u_\# = F(T_{u,A})^{-1} \circ F(S_{u,A}) : F(A,\alpha) \rightarrow F(A,u \alpha u^*)$$
and corner embeddings $S_{u,A},T_{u,A} :A \rightarrow M_2(A,\delta_u)$.

As in \cite[Def. 9.4]{burgUniversalKK}, define an abelian group homomorphism
\begin{equation} \label{eq5}
\Psi_x:F(A) \rightarrow F(B): \Psi_x = {u_-}_\#^{-1} \circ F(j)^{-1}
\circ \big ( u_\# \circ F(s_+) - F(s_-) \big )
\end{equation}
(here $u$ is the cocycle for $A_x$ of \cite[Def. 9.1]{burgUniversalKK}!).

Now assume that $B$ is not necessarily stable.
In \cite[Def. 10.2]{burgUniversalKK} there appears a similar 
variant
$$\Psi'_z:F(A) \rightarrow F(B): \Psi'_z = F(c_B)^{-1}
\circ F(j_B)^{-1} \circ \Psi_{{j_B}_* {c_B}_* (\Phi^{-1}(z))}$$
of $\Psi_x$,
where $z \in KK^G(A,B)$.
Here $c_B: B \rightarrow B \otimes \calk$ is the corner embedding,
see \cite[Def. 10.1]{burgUniversalKK}, and $j_B$ appears in some
split exact sequence
$$\calt: \xymatrix{ 0 \ar[r] & B \otimes \calk \ar[r]^{j_B} & B^+ \otimes \calk \ar[r]^{p_B} & C^*(E) \otimes \calk \ar[r] & 0}$$
in \cite[Def. 10.2]{burgUniversalKK}.
The stars in ${j_B}_*$ and ${c_B}_*$ are defined in
\cite[Def. 8.6]{burgUniversalKK}.

By \cite[Def. 11.1]{burgUniversalKK} there is a natural transformation
$$\xi:KK(A,-) \rightarrow F(-): \xi_B(z) = \Psi'_z (1_{GK(A,A)}).$$

We are now applying \cite[Thm. 1.3]{burgUniversalKK} (=
\cite[Thm. 12.4]{burgUniversalKK}) to the 
canonical quotient functor $G:C^* \rightarrow GK$,
which is split-exact, homotopy
invariant and stable. The claim and proof of \cite[Thm. 12.4]{burgUniversalKK} show that there is a functor $\hat G:KK^G \rightarrow GK$ defined by
$$\hat G(z) = \xi_B(z)$$
for all $z \in KK^G(A,B)$
such that $G$ factorizes over $\hat G$ (i.e. $G= \hat G \circ G_2$
for the canonical quotient functor $G_2:C^* \rightarrow KK^G$).
This functor is an isomorphism, since $GK$ itself has the universal
properties of $KK^G$, confer \cite[5.1]{burg_generators_KK}.

In details we get
$$\hat G(z) = \xi_B(z) = \Psi'_z(1_{GK(A,A)})
= F(c_B)^{-1}
\circ F(j_B)^{-1} \circ \Psi_{{j_B}_* {c_B}_* (\Phi^{-1}(z))}
(1_{GK(A,A)}).
$$

Now observe that for the corner embedding $c_B$, the inverse
map $F(c_B)^{-1}$ is just realized by right multiplication with the
synthetical inverse $c_B^{-1}$ in $GK$.
Similarly, according to the split-exactness of $GK$ the (one-sided) inverse
map $F(j_B)^{-1}$ is just right multiplication with the synthetical
(one-sided) inverse $t_\calt$.

We choose now the $x$ from above as $x:= {j_B}_* {c_B}_* (\Phi^{-1}(z)) \in \F^G(A,B^+ \otimes \calk)$
and put formula (\ref{eq5}) into the formula of $\hat G(z)$. Here, $z$ is the given morphism in $KK^G$ that we want to present in $GK$
via $\hat G$.
Then we have
$$\hat G(z) = F(c_B)^{-1}
\circ F(j_B)^{-1} \circ
{u_-}_\#^{-1} \circ F(j)^{-1}
\circ \big ( u_\# \circ F(s_+) - F(s_-) \big ) \;
(1_{GK(A,A)})
$$
$$= 1_{GK(A,A)} \cdot (s_+ S_{u,A} T_{u,A}^{-1} - s_-) \cdot t_{\cals}
T_{u_-,A} S_{u_-,A}^{-1} \cdot t_{\calt} c_B^{-1}$$
$$= (a d^{-1} -b) \cdot t_\cals e t_\calu \cdot f^{-1} c^{-1}$$
in $GK(A,B)$ by Lemma \ref{lemma2}
for suitable homomorphisms $a,b \in C^*$, corner
embeddings $c,d,e$ and split-exact sequence $\calu$.

If $B$ is unital we can omit $j_B$ in the definition
of $\Psi'_z$.
If $G$ is trivial all cocycles satisfy $u=1$
and thus all $u_\# =1$.
\end{proof}

It is however rather difficult to bring a product of such standardized elements
as in Proposition \ref{propmain} again to such a standard form, see Cuntz \cite{cuntz1984}.
It is not really easier
than 
forming the Kasparov product of Kasparov cycles.

\begin{remark}
{\rm
A further slight simplification of the split exactness axiom could be done by observing that the split exact sequence (\ref{ses})
is isomorphic in $C^*$ to an idempotent $*$-homomorphisms $P: D \rightarrow D$ (translation is $P=fs$).
Then split exactness just says that every idempotent $P \in C^*$ has an orthogonal split 
$t_\cals:D \rightarrow \mbox{ker}(P)$ in $GK$ (orthogonal projection: $t_\cals i = 1_D-P$).
}
\end{remark}


\section{$E$-continuity and Hausdorff property}		\label{section4}

In this section we shall see that the groupoid associated
to an inverse semigroup is Hausdorff if and only if the inverse
semigroup is $E$-continuous.
This condition is technically easier and more intrinsic to the inverse semigroup. We shall see that $E$-continuity is a necessary and
sufficient condition to define a non-degenerate $C_0(X)$-compatible $C_0(X)$-valued $L^2(G)$-module.  
%
%



Let $G$ be a discrete inverse semigroup.

\begin{definition}[$E$ and $X$]
{\rm
Let $E$ denote the subset of idempotent elements of $G$.
The free universal {\em abelian} $C^*$-algebra $C^*(E)$ generated by
the commuting self-adjoint projections of $E$ has a totally disconnected Gelfand spectrum $X$.
That is we have $C^*(E) \cong C_0(X)$.
Under this isomorphism we identify $E$ as a subset of $C_0(X)$
(under the formula $e(x)= x(e)$). 
To this end, we also use the suggestive notation $1_e \in C_0(X)$ 
for the corresponding element of $e \in E$ in $C_0(X)$.
We write ``$x \in e$" for $x \in X$ and $e \in E$ iff $x$
is an element of the support of $1_e \in C_0(X)$ (also denoted by $\mbox{carrier}(1_e)$).
For $e,f \in E$ we use the usual order $e \le f$ in a $C^*$-algebra.
This {\em order} can be extended to $G$ by saying that $g \le h$
for $g,h \in G$
iff $g = h g^* g$
(or equivalently iff $g = g g^* h$).
}
\end{definition}

\begin{definition}[$G$-action]	\label{defgalgebra}
{\rm
In this note we understand under a {\em $G$-action} on a $C^*$-algebra $A$ a semigroup
homomorphism $\alpha: G \rightarrow \mbox{End(A)}$
such that $\alpha_e(a) b = a \alpha_e(b)$ ({compatibility}) for all $e$ in $E$.
In this case, $A$ is called a {\em $G$-algebra}.
A $G$-action on a Hilbert $A$-module $\cale$ is a semigroup
homomorphism $U:G \rightarrow \mbox{LinMaps}(\cale)$ (linear maps)
such that $U_e$ is an adjoint-able operator for all $e \in E$,
and
$$\langle U_g(\xi),U_g(\eta)\rangle = g(\langle \xi,\eta\rangle), \;\; U_g(\xi a) = U_g(\xi) \alpha_g(a), \;\; U_e(\xi) a = \xi \alpha_e(a)$$
(the last identity being called 
{\em compatibility} or {\em $C_0(X)$-compatibility} of $U$)
for all $\xi,\eta \in \cale, a \in A, g \in G$ and $e \in E$.
Then $\cale$ is called a (compatible) {\em $G$-Hilbert $A$-module}.
Often we write the $G$-action in the form $g(\xi):=U_g(\xi)$
and $g(a):= \alpha_g(a)$.
}
\end{definition}

\begin{definition}[$G$-action on $X$]  
{\rm
The $C^*$-algebra $C_0(X)$ is equipped
with the $G$-action $g(1_e):= 1_{g e g^*}$
for $e \in E, g \in G$.
This $G$-action may be extended to the bigger
$C^*$-algebra $\ell^\infty(X)$ by setting
$(g(f))(x):= 1_{\{x \cdot g \neq 0
\}} f(x \cdot g)$ for $g \in G, f \in \ell^\infty(X)$ and characters $x \in X$,
where the (possibly zero) character $x \cdot g:C^*(E) \rightarrow \C$ is defined
by $(x \cdot g)(e)= x(g e g^*)$ for all $e \in E$.
}
\end{definition}

We are going to recall the $E$-continuity property of an inverse semigroup.
For more details see \cite{burgAttempts}.
In the next few paragraphs (until Lemma \ref{lemmalinindepen}) 
we shall identify elements $e \in E$ with
their corresponding characteristic functions
$1_e$ in $C_0(X)$.
Write $\mbox{Alg}^*(E)$ for the dense $*$-subalgebra of $C_0(X)$ generated by the characteristic functions
$1_e$ for all $e \in E$.
Moreover,
write $\bigvee_i f_i :X \rightarrow \C$ for the pointwise supremum of a family of functions $f_i: X \rightarrow \C$.

\begin{definition}   \label{definitionEcontinuous}
{\rm
An inverse semigroup $G$ is called {\em $E$-continuous} if
the function $\bigvee \{ e \in E |\, e \le g\} \in \C^X$ (in precise notation: $\bigvee \{1_e \in C_0(X) |\,e\in E,\, e \le g\} \in \C^X$) 
is a {\em continuous} function in $C_0(X)$
for all $g \in G$.
}
\end{definition}

A simple compactness argument shows the following, see \cite{burgAttempts}:

\begin{lemma}   \label{lemmaEcontinuity}
An inverse semigroup $G$ is {$E$-continuous} if and only if for every $g \in G$ there exists a finite
subset $F \subseteq E$ such that
$\bigvee \{e \in E |\, e \le g\} =\bigvee \{e \in F |\, e \le g\}$.
\end{lemma}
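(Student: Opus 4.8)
The plan is to exploit that $X$ is compact (or at least that the relevant suprema live over a totally disconnected compact-like picture coming from the abelian $C^*$-algebra $C^*(E)$) and that each $1_e$ is a \emph{clopen} characteristic function. For the direction ``finite subfamily $\Rightarrow$ $E$-continuous'', note that a finite pointwise supremum $\bigvee\{1_e : e\in F,\, e\le g\}$ is a finite maximum of continuous (indeed clopen) functions, hence itself continuous and in $C^*(E)\cong C_0(X)$; so if this finite supremum equals the full supremum $\bigvee\{1_e : e\in E,\, e\le g\}$, the latter is automatically continuous, which is exactly $E$-continuity by Definition \ref{definitionEcontinuous}. This half is immediate.

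For the converse, suppose $G$ is $E$-continuous, fix $g\in G$, and set $h := \bigvee\{1_e \in C_0(X) : e\in E,\, e\le g\}$, which by hypothesis lies in $C_0(X)$. First I would observe that, because the $1_e$ are projections and the order on $E$ matches the $C^*$-order, the set $\{1_e : e\le g\}$ is upward directed (given $e,e'\le g$ one has $e\vee e'\le g$ inside the inverse semigroup, and $1_{e\vee e'}$ dominates both, using that the join of idempotents below $g$ is again below $g$). Hence $h$ is the supremum of an upward-directed net of projections in $C_0(X)$, so the net $(1_e)_{e\le g}$ converges to $h$ \emph{uniformly}: indeed $\|h - 1_e\| \le \varepsilon$ means $h - 1_e$, being a positive element of a commutative $C^*$-algebra that is a (decreasing, since the net is increasing) limit of such, eventually has norm $<1$, and any positive element of $C^*(E)$ of norm $<1$ whose pointwise supremum-complement is a characteristic function must in fact satisfy $h = 1_e$ on a clopen set — more cleanly, since $h \in C_0(X)$ it can be uniformly approximated by elements of $\mathrm{Alg}^*(E)$, and being a pointwise supremum of clopen indicators it is itself a clopen indicator $1_{e_0}$ for some $e_0\in E$; then $1_{e_0} = \bigvee_{e\le g} 1_e$ with the supremum attained, and directedness forces $1_{e_0} = 1_e$ for a single $e\le g$, giving $F = \{e\}$.

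The step I expect to be the main obstacle is the passage from ``$h\in C_0(X)$'' to ``$h$ is a \emph{finite} join'': one must rule out the possibility that $h$ is a continuous function that is the pointwise supremum of infinitely many strictly increasing clopen indicators without being attained. This is where compactness of the spectrum $X$ of $C^*(E)$ enters: the carriers $\mathrm{carrier}(1_e)$ for $e\le g$ form an open cover (by clopen sets, directed by inclusion) of the open set $U := \mathrm{carrier}(h)$; since $h\in C_0(X)$, $U$ is $\sigma$-compact and in fact the support of a single function, and $\overline{U}$ is compact, so finitely many $\mathrm{carrier}(1_{e_1}),\dots,\mathrm{carrier}(1_{e_k})$ cover it, and by directedness a single $\mathrm{carrier}(1_{e_0})$ with $e_0 := e_1\vee\cdots\vee e_k \le g$ already covers $U$; then $1_{e_0}\le h$ (as $e_0\le g$) and $1_{e_0}\ge h$ (as its carrier contains $\mathrm{carrier}(h)$ and both are indicators), so $h = 1_{e_0}$ and $F := \{e_0\}$ works. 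I would cite \cite{burgAttempts} for the routine verification that the join $e_1\vee\cdots\vee e_k$ of idempotents below $g$ is again below $g$, and that carriers of $1_e$ for $e\in E$ are clopen and generate the topology of $X$.
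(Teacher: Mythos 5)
Your overall strategy is the right one, and in fact the paper gives no argument at all (it defers to \cite{burgAttempts} with the phrase ``a simple compactness argument''), which is exactly the finite-subcover argument at the end of your proposal: the easy direction (a finite pointwise supremum of projections is again a projection of $C_0(X)$, hence continuous) is fine, and for the converse the correct core is that $h:=\bigvee\{1_e : e\in E,\ e\le g\}$, being a pointwise supremum of indicators, is $\{0,1\}$-valued, so if it is continuous and lies in $C_0(X)$ its carrier $U=\{h\ge 1/2\}$ is compact and open; the carriers of the $1_e$ with $e\le g$ form an open cover of $U$, a finite subcover gives $e_1,\dots,e_k\le g$ with $h=1_{e_1}\vee\cdots\vee 1_{e_k}$, and $F=\{e_1,\dots,e_k\}$ is the required finite set. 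Note that the compactness you need is \emph{not} compactness of $X$ (the spectrum of $C^*(E)$ need not be compact) nor the false general claim that $\overline{\mathrm{carrier}(h)}$ is compact for $h\in C_0(X)$; it is precisely the $\{0,1\}$-valuedness of $h$ that makes $U$ compact.

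The genuine errors are the directedness and join claims, which you use both in the middle ``uniform convergence of the net'' digression and in the final reduction to a singleton. In a general inverse semigroup the join $e\vee e'$ of two idempotents below $g$ need not exist in $E$ at all, so $\{1_e : e\le g\}$ need not be upward directed; the paper's own finitely presented example exhibits $t$ with pairwise incomparable idempotents $p_0,p_1,p_2,\ldots\le t$ admitting no common upper bound in $E$. Consequently the step ``$e_0:=e_1\vee\cdots\vee e_k\le g$'' is illegitimate, and the conclusion ``$F=\{e_0\}$, a single element'' is strictly stronger than the lemma and false in general; likewise the assertion that $h$, once continuous, must equal $1_{e_0}$ for some $e_0\in E$ is false, since $h$ is then a projection of $C_0(X)=C^*(E)$ but projections of $C^*(E)$ need not come from $E$ (already $1_e\vee 1_f$ typically does not). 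The fix is simply to delete the directedness/join material and stop at the finite subcover: $F=\{e_1,\dots,e_k\}$ is exactly what the lemma asks for.
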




\begin{definition}[Compatible $C_0(X)$-valued $L^2(G)$-module]     \label{defCompatibleL2}
{\rm
Let $G$ be an $E$-continuous inverse semigroup.
Write $c$ for the linear span of all functions $\varphi_g: G \rightarrow \C$ (in the linear space $\C^G$) defined by
\begin{eqnarray*}
\varphi_g(t) & :=& 1_{\{t \le g\}} 
\end{eqnarray*}
(characteristic function)
for all $g,t \in G$.
Endow $c$ with the $G$-action $g(\varphi_h) := \varphi_{g h}$ for all $g,h \in G$.
Turn $c$ to an $\mbox{Alg}^*(E)$-module by setting $\xi e := e(\xi)$ for all $\xi \in c$ and $e \in E$.
Define an $\mbox{Alg}^*(E)$-valued inner product on $c$ by
\begin{eqnarray}  \label{identinnerprodphi}
\langle \varphi_g, \varphi_h \rangle &:=& \bigvee \{e \in E \,|\, eg = e h ,
\,e \le g g^* h h^* \}.
\end{eqnarray}
The norm completion of $c$ is a 
$G$-Hilbert $C_0(X)$-module denoted by $\widehat{\ell^2}(G)$.
}
\end{definition}

\begin{lemma}[\cite{burgAttempts}]    \label{lemmalinindepen}
The vectors $(\varphi_g)_{g \in G} \subseteq \widehat{\ell^2}(G)$ are linearly independent.
\end{lemma}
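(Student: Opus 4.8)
The plan is to pass to the fibres of the $C_0(X)$-Hilbert module $\widehat{\ell^2}(G)$, i.e.\ to evaluate the $C_0(X)$-valued inner product at the characters $x\in X$ of $C^*(E)$. Suppose $\xi:=\sum_{i=1}^n\lambda_i\varphi_{g_i}$ vanishes in $\widehat{\ell^2}(G)$ with the $g_i$ pairwise distinct; I must deduce that all $\lambda_i=0$. Vanishing of $\xi$ means $\langle\xi,\xi\rangle=0$ in $C_0(X)$, hence $\sum_{i,j}\overline{\lambda_i}\lambda_j\,\langle\varphi_{g_i},\varphi_{g_j}\rangle(x)=0$ for every character $x$, where by the defining formula (\ref{identinnerprodphi}) the number $\langle\varphi_{g_i},\varphi_{g_j}\rangle(x)$ equals $1$ precisely when there is an idempotent $e\in E$ with $x(e)=1$, $eg_i=eg_j$ and $e\le g_ig_i^* g_jg_j^*$, and equals $0$ otherwise. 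Write $r_i:=g_ig_i^*$.

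The first step is to read off the structure of this Gram matrix at a fixed $x$. Put $I_x:=\{i:x(r_i)=1\}$. Since a witnessing idempotent $e$ as above lies below $r_j$, we get $\langle\varphi_{g_i},\varphi_{g_j}\rangle(x)=0$ whenever $i\notin I_x$ or $j\notin I_x$; and on $I_x$ the relation $i\sim_x j:\Longleftrightarrow\langle\varphi_{g_i},\varphi_{g_j}\rangle(x)=1$ is an equivalence relation — reflexivity uses $\langle\varphi_{g_i},\varphi_{g_i}\rangle=1_{r_i}$, symmetry is built into the defining conditions (recall idempotents commute), and transitivity follows by multiplying the two witnessing idempotents. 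Hence the Gram matrix at $x$ is, up to zero rows and columns, block diagonal with all-ones blocks indexed by the $\sim_x$-classes, so that $\langle\xi,\xi\rangle(x)=\sum_{C}\bigl|\sum_{i\in C}\lambda_i\bigr|^2$, and its vanishing forces $\sum_{i\in C}\lambda_i=0$ for every class $C$.

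It then remains to exhibit, for a well-chosen index, a character whose class is a singleton. I would argue by contradiction: assuming some $\lambda_i\ne 0$, pick $i_0$ with $\lambda_{i_0}\ne 0$ such that $g_{i_0}$ is maximal, with respect to the natural partial order of $G$, among the finitely many $g_i$ with $\lambda_i\ne 0$. Take $x_0$ to be the character of $C^*(E)$ determined by $x_0(e)=1\Longleftrightarrow e\ge r_{i_0}$; one checks this is a genuine character (it is the principal filter $\uparrow r_{i_0}$) and $x_0(r_{i_0})=1$. For any $i$, a witnessing idempotent $e$ for $\langle\varphi_{g_{i_0}},\varphi_{g_i}\rangle(x_0)=1$ must satisfy $r_{i_0}\le e\le r_{i_0}r_i$, forcing $e=r_{i_0}$, whence $r_{i_0}\le r_i$ and $g_{i_0}=r_{i_0}g_{i_0}=r_{i_0}g_i$, i.e.\ $g_{i_0}\le g_i$ (using $g=gg^*h$ as an equivalent form of $g\le h$). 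Thus every $i$ in the $\sim_{x_0}$-class $C_0$ of $i_0$ satisfies $g_{i_0}\le g_i$, and if moreover $\lambda_i\ne0$ then maximality of $g_{i_0}$ together with distinctness of the $g_i$ forces $i=i_0$. Therefore $\sum_{i\in C_0}\lambda_i=\lambda_{i_0}$, which must be $0$ — a contradiction. Hence all $\lambda_i$ vanish.

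The step I expect to be the most delicate is the last one: producing a suitable character of the \emph{universal} abelian $C^*$-algebra $C^*(E)$. The point is that, being universal on the semilattice $E$, the algebra $C^*(E)$ does not identify $r_{i_0}$ with the join of the strictly smaller idempotents below it, and it is exactly the resulting ``extra'' character $\uparrow r_{i_0}$ that separates $g_{i_0}$ from every distinct $g_i$ which is not strictly above it — this is where $E$-continuity, built into Definition \ref{defCompatibleL2}, enters, since it is what makes $\langle\varphi_{g_i},\varphi_{g_j}\rangle$ an honest element of $C_0(X)$ in the first place. (If $G$ has a zero, note that $\varphi_0$ is automatically a null vector, so one restricts attention to the $\varphi_g$ with $g\ne0$.) The routine verifications — well-definedness of $\sim_x$, the block structure, and that $\uparrow r_{i_0}$ is a character — are carried out in \cite{burgAttempts}.
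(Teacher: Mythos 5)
Your argument is correct, and since the paper itself gives no proof of this lemma (it only cites \cite{burgAttempts}), I can only assess it on its own terms: evaluating the $C_0(X)$-valued inner product at characters, observing that the Gram matrix of $(\varphi_{g_i})_i$ at a fixed $x$ is block-diagonal with all-ones blocks over the equivalence relation $i\sim_x j$, and then separating a maximal $g_{i_0}$ (maximality exists since the natural order is a partial order and the index set is finite) by the principal-filter character $\epsilon_{r_{i_0}}$, $\epsilon_{r_{i_0}}(e)=1_{\{e\ge r_{i_0}\}}$, is a sound and complete line of reasoning. Your key verifications check out: a witness $e$ for $\langle\varphi_{g_{i_0}},\varphi_{g_i}\rangle(\epsilon_{r_{i_0}})=1$ is squeezed to $e=r_{i_0}$, giving $r_{i_0}\le r_i$ and $g_{i_0}=g_{i_0}g_{i_0}^*g_i$, i.e.\ $g_{i_0}\le g_i$, so the $\sim_{x_0}$-class sum reduces to $\lambda_{i_0}$; and the character you use is exactly the $\epsilon_e$ the paper itself introduces in the definition of $\varepsilon(E)$, so its existence needs no extra discussion. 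Two small remarks. First, the block analysis can be bypassed: with $x_0=\epsilon_{r_{i_0}}$ one has $\langle\varphi_{g_{i_0}},\varphi_{g_i}\rangle(x_0)=1$ exactly when $g_{i_0}\le g_i$, so $\langle\varphi_{g_{i_0}},\xi\rangle(x_0)=\lambda_{i_0}$ by maximality, and $\xi=0$ already forces $\lambda_{i_0}=0$ by Cauchy--Schwarz; this shortens the proof but is not essential. Second, your parenthetical about a zero element is misleading under the paper's conventions: since $C^*(E)$ is the \emph{free} universal abelian $C^*$-algebra on $E$, the projection $1_z$ attached to a zero $z\in E$ is nonzero (the character $\epsilon_z$ does not vanish on it), so $\varphi_z$ is not a null vector and no restriction to $g\ne 0$ is needed — which is consistent with the lemma as stated for all of $G$.
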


We recall the well-known topological
groupoid associated to an inverse semigroup by Paterson \cite{paterson}:

\begin{definition}[{Groupoid associated to an inverse semigroup}]
{\rm
Let $G$ be a discrete inverse semigroup and $X$ the Gelfand spectrum of $C^*(E)$.
Consider the topological subspace $G * X
= \{(g,x) \in G \times X|\, g \in G, \,x \in g^* g\}$ of the topological space $G \times X$ (product topology with $G$ having the discrete topology). Two points $(g,x),(h,y)$ in $G * X$
are called {\em equivalent}, also denoted $(g,x) \equiv (h,y)$,
iff $x=y$ and $g e = h e$ for some $e \in E$
with $x \in e$.
Let $\pi: G * X \rightarrow G * X /\equiv$ denote the set-theoretical
quotient map. The quotient is a groupoid under the
multiplication: $\pi(g,x) \pi(h,y)
= \pi(g h, y)$ if and only if 
for all $e \in E$
such that $y \in e$ one has $x \in (h e)(h e)^*$. Otherwise the composition is declared
to be undefined.

We now regard the
quotient $G * X /\equiv$ as a topological groupoid under the quotient topology and call it the
{\em groupoid asscociated to the inverse semigroup $G$}.
(
Recall that a subset $Y \subseteq G * X /\equiv$ is declared
to be open if and only if $\pi^{-1}(Y)$ is open.)
}
\end{definition}

Usually the groupoid associated to $G$ is a non-Hausdorff topological space.
We are going to prove that the Hausdorff condition is equivalent to $E$-continuity of $G$.



\begin{lemma}
The sets of the form $\pi(g \times U)$, where $g \in G$
and $U \subseteq X$ is an open subset of $X$ with $U \subseteq \mbox{carrier}(g^*g)$, are open and generate the topology of
$G * X/\equiv$. (Here $g \times U:= \{g \} \times U$.)

\end{lemma}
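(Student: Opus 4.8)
The statement has two parts: (i) that each set $\pi(g \times U)$, for $U \subseteq X$ open with $U \subseteq \mbox{carrier}(g^*g)$, is open in $G * X/\equiv$; and (ii) that such sets generate the quotient topology. For (i), by definition of the quotient topology I must show $\pi^{-1}(\pi(g \times U))$ is open in $G * X$. The key is to unwind the equivalence relation: a point $(h,y) \in G * X$ lies in $\pi^{-1}(\pi(g \times U))$ iff $(h,y) \equiv (g,x)$ for some $x \in U$, i.e. iff $y \in U$ and $he = ge$ for some $e \in E$ with $y \in e$. So I would write
$$\pi^{-1}(\pi(g \times U)) = \bigcup_{e \in E} \{h\} \times \big( U \cap \mbox{carrier}(1_e) \cap \{ y \in X : y \in h^*h \} \big),$$
where the union is over those $h \in G$ and $e \in E$ with $he = ge$ — wait, more carefully, for each fixed $h$ the fibre over $h$ is $\bigcup \{ U \cap \mbox{carrier}(1_e) : e \in E,\ he = ge \} \cap \mbox{carrier}(h^*h)$. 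Each such set is an intersection of open subsets of $X$ (the carriers $\mbox{carrier}(1_e)$ are clopen since $1_e$ is a characteristic function of a clopen set, $U$ is open, $\mbox{carrier}(h^*h)$ is clopen) intersected appropriately, hence open in $X$; since $G$ is discrete, the union over $h$ of $\{h\} \times (\text{open set})$ is open in $G \times X$ and its intersection with $G * X$ is open there. That gives (i).

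For (ii), I would take an arbitrary open set $Y \subseteq G * X/\equiv$ and a point $\pi(g,x) \in Y$, and produce a basic set $\pi(g \times U)$ with $\pi(g,x) \in \pi(g \times U) \subseteq Y$. Since $\pi^{-1}(Y)$ is open in $G * X$ and contains $(g,x)$, and $G$ is discrete, there is an open neighbourhood $V$ of $x$ in $X$ with $\{g\} \times V \subseteq \pi^{-1}(Y)$ and (shrinking) $V \subseteq \mbox{carrier}(g^*g)$. Then $\pi(g \times V) \subseteq \pi(\pi^{-1}(Y)) \subseteq Y$ and $\pi(g,x) \in \pi(g \times V)$. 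Since $X$ has a basis of such open sets $V$ (indeed a basis of clopen sets coming from the $1_e$), this shows the sets $\pi(g \times U)$ form a subbasis — in fact a basis — for the topology. Combined with (i), they generate the topology.

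**Expected main obstacle.** The routine but slightly delicate point is part (i): verifying that $\pi^{-1}(\pi(g \times U))$ really is open, because the equivalence relation mixes different group elements $h$, so the preimage is genuinely a union over $h \in G$ of slices, and for each $h$ one must check the slice $\{ y \in U \cap \mbox{carrier}(h^*h) : \exists e \in E,\ he = ge,\ y \in \mbox{carrier}(1_e) \}$ is open in $X$. This is where one uses that each $\mbox{carrier}(1_e)$ is clopen (so an arbitrary union of them, as $e$ ranges over $\{e : he = ge\}$, is open) and that finite meets/the order structure behave well; discreteness of $G$ then makes the whole thing open in the product. Everything else — that $\pi(g,x) \in \pi(g \times U)$, that $\pi$ maps the relevant sets into $Y$ — is immediate from the definitions. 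No deep input is needed; the lemma is essentially a bookkeeping exercise with the quotient topology and the clopen structure of $X$.
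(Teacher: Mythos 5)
Your proposal is correct and follows essentially the same route as the paper: openness of $\pi^{-1}(\pi(g\times U))$ is obtained by producing, for each $h$, open slices cut out by the carriers of the witnessing idempotents $e$ with $he=ge$ (you write this as an explicit union, the paper as a pointwise neighbourhood $h\times V$ with $V=\mbox{carrier}(e)\cap U\cap\mbox{carrier}(h^*h)$), and generation is obtained by pulling an open set back, using discreteness of $G$ to find a slice $\{g\}\times V$, and pushing it forward. The only cosmetic difference is that the paper concludes $\pi(g\times U)\subseteq O$ via saturation of $\pi^{-1}(O)$, while you use $\pi(\pi^{-1}(Y))\subseteq Y$; both are the same bookkeeping.
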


\begin{proof}
We claim that the inverse $\pi^{-1}( \pi(g \times U))$ is 
open.
Indeed if $(h,x) \in \pi^{-1}( \pi(g \times U))$
then it is equivalent to some $(g,x) \in g \times U$.
Hence there exists some $e \in E$ with $x \in e$ and $h e = g e$. Let $V= \mbox{carrier}(e) \cap U \cap \mbox{carrier}(h^*h)$.
Then $h \times V$ is an open subset of $\pi^{-1}( \pi(g \times U))$
containing $(h,x)$.


If $\pi^{-1}(O)$ is open and contains the point $(g,x)$
together with its open neighborhood $g \times U$
then $\pi^{-1} (\pi(g \times U)) \subseteq \pi^{-1}(O)$.
Thus $\pi(g \times U) \subseteq O$. Hence such sets generate
the topology.
\end{proof}

We call $\pi(g \times U)$ the {\em open set
in $G * X/\equiv$ generated
by $g \times U$}.

\begin{lemma}
If 
$G$ is $E$-continuous then its associated groupoid is Hausdorff.
\end{lemma}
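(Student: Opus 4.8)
The goal is to show that if $G$ is $E$-continuous then the associated groupoid $\calg := G * X/\!\equiv$ is Hausdorff. The natural strategy is to take two distinct points $\pi(g,x)$ and $\pi(h,y)$ and produce disjoint open neighborhoods, using the basic open sets $\pi(g' \times U)$ from the previous lemma. First I would dispose of the easy case: if $x \neq y$ in $X$, then since $X$ is a totally disconnected (hence Hausdorff) Gelfand spectrum, there are disjoint open $U \ni x$, $V \ni y$ in $X$, and shrinking them to lie in the relevant carriers, $\pi(g \times U)$ and $\pi(h \times V)$ are disjoint open neighborhoods (the quotient only identifies points with the same $X$-coordinate, so disjointness of $U,V$ is inherited). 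So the real content is the case $x = y$ but $\pi(g,x) \neq \pi(h,x)$.

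**The main case.** Here $x = y$ but $(g,x) \not\equiv (h,x)$, i.e. there is no $e \in E$ with $x \in e$ and $ge = he$. I would consider the element $g^* h \in G$ (or symmetrically work with when $ge = he$ can occur) and look at $w := \bigvee\{e \in E \mid e \le g^* h\}$, which by $E$-continuity (Definition \ref{definitionEcontinuous}) is a \emph{continuous} function in $C_0(X)$, hence its carrier $C := \mathrm{carrier}(w)$ is \emph{clopen} — and by Lemma \ref{lemmaEcontinuity} it equals $\mathrm{carrier}$ of a finite supremum $\bigvee\{e \in F \mid e \le g^* h\}$, so $C$ is the union of finitely many basic clopen sets $\mathrm{carrier}(e_i)$. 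The key observation is: $ge = he$ for some $e \in E$ with $x \in e$ if and only if $x$ lies in the carrier of $g^*g\, h h^*$ intersected with... more precisely, $ge = he$ (for $e$ small enough below $g^*g$) corresponds exactly to $e \le g^*h$ in the inverse semigroup, so $(g,x) \equiv (h,x)$ iff $x \in C$. Since $\pi(g,x) \neq \pi(h,x)$, we have $x \notin C$. Now $C$ is clopen, so $X \setminus C$ is an open neighborhood of $x$; intersect it with $U_0 := \mathrm{carrier}(g^*g) \cap \mathrm{carrier}(h^*h)$ to get an open $W \ni x$. I claim $\pi(g \times W)$ and $\pi(h \times W)$ are disjoint: if they met, there would be $x' \in W$ with $(g,x') \equiv (h,x')$, forcing $x' \in C$, contradicting $W \cap C = \emptyset$.

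**The obstacle.** The step I expect to require the most care is the precise translation between the semigroup-theoretic condition "$ge = he$ for some idempotent $e$ with $x \in e$" and the statement "$x \in \mathrm{carrier}\big(\bigvee\{e \le g^*h\}\big)$." One has to check that $ge = he$ with $e \le g^*g$ is equivalent to $e \le g^*h$ (using $e = e g^*g$, multiply $ge = he$ on the left by $g^*$ to get $g^*ge = g^*he$, i.e. $e = g^*he$ when $e \le g^*g$; conversely $e \le g^*h$ means $e = g^*h\,e$, and then $ge = gg^*he = he\cdot(\text{something})$... one must verify $g g^* h e = h e$, which follows if also $e \le h^*h$, which is why we must restrict to $W \subseteq \mathrm{carrier}(g^*g)\cap\mathrm{carrier}(h^*h)$ and possibly symmetrize using $e \le (g^*h)(g^*h)^* = g^*hh^*g$). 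A clean way to handle this is to note that it suffices to produce \emph{some} idempotent-indexed witness, and the set of $x$ for which a witness exists is exactly the carrier of the (finite, by Lemma \ref{lemmaEcontinuity}) supremum — so continuity of that supremum is precisely what makes this set clopen and hence lets us separate $x$ from it. Once that translation is nailed down, the rest is the routine neighborhood-disjointness check above.
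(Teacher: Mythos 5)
Your proposal is correct and follows essentially the same route as the paper's proof: the paper likewise uses $E$-continuity of $\bigvee\{e\in E:\, e\le h^*g\}$ (your $g^*h$, up to taking adjoints), observes that the given point lies outside its carrier, takes basic open sets over the complement intersected with the carriers of $g^*g$ and $h^*h$, and derives a contradiction from any equivalence witness, exactly as in your disjointness check. The only cosmetic differences are that you also spell out the trivial case $x\neq y$ and that the semigroup identity you flag as the delicate step (that $e\le g^*h$ forces $ge=he$, using $e\le (g^*h)(g^*h)^*$ and $e\le (g^*h)^*(g^*h)$) indeed goes through as you indicate.
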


\begin{proof}
Let $(g,x),(h,x) \in G * X$ be two points such that $(g,x) \not \equiv (h,x)$.
Then for all $e \in E$ with $x \in e$ and $e \le g^*g h^*h$ one has $g e \neq h e$,
and so $e \not \le h^* g$.
Since $G$ is $E$-continuous the function
$F:= \bigvee_{f \in E, f \le h^*g} f$ is continuous.
Note that $x \notin F$.

Let $t \subseteq X$ be the (open!) complement of the carrier of $F$.
Consider $U_g:=\{g\} \times t \cap \mbox{carrier}(g^*g)$ and $U_h:=\{h\} \times t \cap \mbox{carrier}(h^*h)$.
Clearly $x \in t$ and so $(g,x) \in U_g$ and $(h,x) \in U_h$.


Consider the open subsets $W_g$ and $W_h$ that $U_g$ and $U_h$ generate in $G * X/\equiv$.
Assume $W_g$ and $W_h$ would
intersect. Then there are $(g,y) \in U_g$, $(h,z) \in U_h$ such that $(g,y)\equiv (h,z)$.
That is, there is a $e \in E$ such that $y=z \in e$, $e \le g^*g h^*h$ and $g e = h e$.
Hence $y \in e \le F$.
By definition of $U_g$ one has also certainly
$y \in t$.
A contradiction.
%
This shows that $W_g$ and $W_h$ are disjoint neighborhoods
which separate $(g,x)$ and $(h,x)$.
%
%
\end{proof}

\begin{lemma}
If its associated groupoid is Hausdorff
then $G$ is $E$-continuous.
\end{lemma}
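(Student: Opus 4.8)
The plan is to argue by contrapositive: I will assume $G$ is \emph{not} $E$-continuous and produce two points in the groupoid that cannot be separated by open sets. By Lemma \ref{lemmaEcontinuity}, failure of $E$-continuity means there is some $g \in G$ for which the function $F := \bigvee\{1_e \mid e \in E,\ e \le g\}$ is not continuous, and in particular (since each $1_e$ is continuous with open-closed carrier, and finite suprema of such are continuous) $F$ is \emph{not} equal to any finite subsup $\bigvee\{1_e \mid e \in F_0,\ e \le g\}$ over a finite $F_0 \subseteq E$. Concretely, the carrier $C := \bigcup_{e \le g} \operatorname{carrier}(1_e)$ is an open subset of $X$ whose closure $\overline{C}$ strictly contains $C$; pick a point $x_0 \in \overline{C} \setminus C$. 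Note $x_0 \in g^*g$ need not be automatic, but $e \le g$ forces $e \le g^*g$, so every carrier of such $e$ lies inside $\operatorname{carrier}(g^*g)$, hence $C \subseteq \operatorname{carrier}(g^*g)$ and thus $x_0 \in \overline{C} \subseteq \operatorname{carrier}(g^*g)$ since the latter is closed; so $(g, x_0) \in G * X$ and also $(g^*g, x_0) \in G * X$.

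Next I would compare the two groupoid elements $\pi(g, x_0)$ and $\pi(g^*g, x_0)$. They are distinct in $G * X/\equiv$: if $(g, x_0) \equiv (g^*g, x_0)$ there would be $e \in E$ with $x_0 \in e$ and $g e = g^*g\, e = e$ (using $g^*g\, e = e$ for $e \le g^*g$, which holds once we shrink $e$); but $g e = e$ with $e$ idempotent-supported below would give $e \le g$ (indeed $e = g e = g e^* e$ shows $e \le g$), forcing $x_0 \in \operatorname{carrier}(1_e) \subseteq C$, contradicting $x_0 \notin C$. (One must be slightly careful that $x_0 \in e$ and $e \le g^*g$ is arrangeable: replace $e$ by $e \wedge g^*g$, still containing $x_0$ since $x_0 \in g^*g$, and the argument goes through.) So the two points are genuinely different, and by the Hausdorff hypothesis they have disjoint open neighborhoods, which by the previous lemma we may take of the basic form $\pi(g \times U)$ and $\pi(g^*g \times V)$ with $U, V \subseteq X$ open, $x_0 \in U \cap V$, $U \subseteq \operatorname{carrier}(g^*g)$, $V \subseteq \operatorname{carrier}(g^*g)$.

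Finally I derive the contradiction with $x_0 \in \overline{C}$. Since $x_0 \in \overline{C}$ and $U \cap V$ is an open neighborhood of $x_0$, there is a point $y \in (U \cap V) \cap C$, i.e.\ $y \in \operatorname{carrier}(1_e)$ for some $e \le g$. Then $e \le g^*g$ as well, and $g e = e$ (since $e \le g$ gives $e = g e^* e = g e$ because $e$ is idempotent and self-adjoint-supported), so $(g, y) \equiv (g^*g, y)$: they are equivalent via the idempotent $e$, which satisfies $y \in e$, $e \le g^*g \cdot (g^*g)^* = g^*g$, and $g e = g^*g\, e (= e)$. Hence $\pi(g, y) = \pi(g^*g, y)$ lies in both $\pi(g \times U)$ and $\pi(g^*g \times V)$ — because $(g,y) \in g \times U$ and $(g^*g, y) \in g^*g \times V$ — contradicting disjointness of the two neighborhoods. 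Therefore $G$ must be $E$-continuous.

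The main obstacle I anticipate is bookkeeping around the order relation $e \le g$ and the equivalences: one needs to check carefully (i) that $e \le g$ is equivalent to $g e = e$ for $e \in E$ (via $g = g g^* h$ with $h = g$... actually directly: $e \le g \iff e = g e^* e = g e$), (ii) that points $x_0$ witnessing non-$E$-continuity can be chosen inside $\operatorname{carrier}(g^*g)$ so that the pairs $(g, x_0)$, $(g^*g, x_0)$ are legitimately in $G * X$, and (iii) that the equivalence $(g, y) \equiv (g^*g, y)$ really does place $\pi(g,y)$ in \emph{both} chosen basic open sets, for which the key point is simply that $(g^*g, y)$ lies in $g^*g \times V$ since $y \in V$. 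None of this is deep, but the definitions of $\equiv$ and of $G * X$ must be applied with some care, and the identification of carriers of the $1_e$ with the basic clopen sets of $X$ is used throughout.
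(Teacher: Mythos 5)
Your proposal is correct and follows essentially the same route as the paper: at a point of discontinuity $x_0$ of $\bigvee\{1_e : e\le g\}$ you compare $\pi(g,x_0)$ with $\pi(g^*g,x_0)$, show they are distinct (any equivalence would produce an idempotent $\le g$ whose carrier contains $x_0$), and then use that every basic neighborhood of $x_0$ meets some carrier of an $e\le g$, giving a common point $\pi(g,y)=\pi(g^*g,y)$ in any pair of basic neighborhoods, contradicting Hausdorffness. The extra bookkeeping you add (e.g. $x_0\in\mbox{carrier}(g^*g)$, shrinking $e$ to $eg^*g$) is exactly the implicit content of the paper's argument.
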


\begin{proof}



Let $g \in G$.
Assume the projection $F:=\bigvee_{f \in E, \,f \le g} f$ would be discontinuous,
say in the point $x \in X$.

Then for any neighborhoods $U$ of $x$ there is at least one $f \le g$
($f \in E$)
such that $U$ has nonempty intersection with the carrier of $f$. 
On the other hand 
$x$ is not in the carrier of any $f \in E$ with $f \le g$,
because there
$F$ is continuous.


Consider the points $(g,x)$ and $(g^*g,x)$ in $G * X$.
They must be distinct in the quotient $G * X/\equiv$
because assuming to the contrary the existence of some $e \in E$
with $x \in e$ and $g^*g e = g e$ would imply $g^*g e \le g$;
a contradiction to what we said above.


Let $U \subseteq \mbox{carrier}(g^*g) \subseteq X$ be an open neighborhood of $x$.
Consider the open neighborhoods $W_g$ and $W_{g^*g}$ 
in $G * X / \equiv$ generated by $\{g\} \times U$ and $\{g^*g\} \times U$.
As remarked above we may
choose $y \in U, f \in E$ such that $y \in f$ 
and $f \le g$.
Then $(g,y)$ and $(g^*g,y)$ are equivalent because $y \in f$ and $g^*g f = g f$.

Hence $W_g$, $W_{g^*g}$ intersect. Hence $(g,x)$ and $(g^*g,x)$ cannot be separated.
Contradiction.
\end{proof}

\begin{corollary}
An inverse semigroup 
is $E$-continuous
if and only if its associated groupoid is Hausdorff.
\end{corollary}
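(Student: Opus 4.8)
The plan is simply to invoke the two lemmas immediately preceding this statement, which together supply the two implications. The lemma asserting that $E$-continuity of $G$ forces its associated groupoid to be Hausdorff gives one direction, and the lemma asserting that Hausdorffness of the associated groupoid forces $G$ to be $E$-continuous gives the converse. Thus the corollary is nothing but the conjunction of those two results, and no new argument is needed.

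For orientation let me recall where the actual content sits. In the implication ``$E$-continuous $\Rightarrow$ Hausdorff'', given two non-equivalent points $(g,x),(h,x)$ one checks that $e \not\le h^*g$ for every $e \in E$ with $x \in e$ and $e \le g^*g h^*h$, so the continuous function $F = \bigvee\{f \in E \mid f \le h^*g\}$ does not carry $x$; the complement $t$ of $\mathrm{carrier}(F)$ is then open, and $\{g\}\times t$, $\{h\}\times t$ generate disjoint open neighborhoods in the quotient separating $(g,x)$ and $(h,x)$. In the implication ``Hausdorff $\Rightarrow$ $E$-continuous'' one argues by contraposition: if $F = \bigvee\{f \in E \mid f \le g\}$ were discontinuous at some $x$, then $(g,x)$ and $(g^*g,x)$ are distinct in the quotient, yet every basic open neighborhood $\{g\}\times U$ of the first meets every $\{g^*g\}\times U$ of the second (choose $y \in U$ and $f \le g$ with $y \in f$; then $(g,y)\equiv (g^*g,y)$), contradicting the Hausdorff property.

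Since both implications have already been established, there is no remaining obstacle: the proof of the corollary consists only in citing the two preceding lemmas, and all the real bookkeeping has been carried out there.
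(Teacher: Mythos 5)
Your proposal is correct and matches the paper exactly: the corollary is stated there without further argument, precisely as the conjunction of the two preceding lemmas ($E$-continuity implies Hausdorffness, and Hausdorffness implies $E$-continuity). Your recap of those lemmas' proofs is also faithful to the paper's arguments, so nothing more is needed.
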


We have seen in Definition \ref{defCompatibleL2} that
for $E$-continuous inverse semigroups there exist
non-degenerate compatible $L^2(G)$-modules
with coefficients in $C_0(X)$.
The next example 
indicates
that we cannot construct
such $L^2(G)$-modules for $E$-discontinuous
inverse semigroups.

Before that,
for the discussion of another $L^2(G)$-module, 
we recall the following discretized coefficient algebra $\varepsilon(E)$ of $C_0(X)$.

\begin{definition}[Discretized coefficient algebra $\varepsilon(E)$ of $C_0(X)$]
{\rm
Recall that there exists a map $\epsilon:E \rightarrow X$
assigning to each $e \in E$ the character $\epsilon_e$ on $C^*(E)$
determined by the formula $\epsilon_e(f) = 1_{\{f \ge e\}}$ for every $f \in E$.
The image $\epsilon(E)$ is dense in $X$, see \cite{paterson} or \cite[3.2]{khoshkamskandalisRegular}.
%
We have a $G$-invariant sub-$C^*$-algebra $$\varepsilon(E) := c_0 \big (\epsilon(E) \big ) \subseteq \ell^\infty(X)$$
(complex-valued functions on the image of $\epsilon$ vanishing at infinity).
Given $e \in E$, we write $\varepsilon_e$
for the characteristic one-point supported
function $1_{\{\epsilon_e\}} \in \varepsilon(E) \subseteq \ell^\infty(X)$.  
One 
checks that $G$ acts through $g(\varepsilon_e)= \varepsilon_{g e g^*}$
if $e \le g^* g$, and $g(\varepsilon_e)=0$ otherwise.
}
\end{definition}

%

\begin{example}[{Elementary abelian $E$-discontinuous example}]   \label{example1}
{\rm
%
Let us discuss one of the most simplest examples of an (even abelian) inverse
semigroup $G$ which is not $E$-continuous.
Let $G=\{1,S,e_1,e_2,e_3,\ldots\}$
consist of an identity element $1$, a strictly increasing sequence of projections $e_1 < e_2 < e_3 < \ldots < 1$, and a symmetry
$S \neq 1$ (i.e. $S^2=1, S^*=S$) 
such that $S e_n = e_n S = e_n$ for all $n \ge 1$.
(A concrete representation of $G$ on a direct sum Hilbert space $H \oplus H$ may be given as
$1=\mbox{id}_H \oplus \mbox{id}_H$, $S = s \oplus \mbox{id}_{H}$ with $s$ a symmetry and $e_n \le 0 \oplus \mbox{id}_{H}$.) 

The associated $C^*$-algebra $C^*(G)$ is an AF-algebra. Indeed it is 
the union of its finite-dimensional sub-$C^*$-algebras $A_n$ generated by $\{1,S,e_1,\ldots, e_n\}$.
One has $A_n \cong \C^{n+2}$ for all $n \ge 0$.
The two generating projections of $A_0 \cong \C^2$
are $(1 \pm S)/2$. The projection
$(1-S)/2$ is orthogonal to all projections $e_n$, and $e_n < (1+S)/2$.
Hence
$K_0(C^*(G))= \bigoplus_\N \Z \sqcup \{1\}$
(here $1$ denotes an adjoint unit).

If we compare this with $\varepsilon(E) \rtimes G$ then we have that it is the union of the sub-$C^*$-algebras $B_n$ generated
by $\varepsilon_{1} \rtimes 1, \varepsilon_{1} \rtimes S,
\varepsilon_{e_1} \rtimes e_1, \ldots , \varepsilon_{e_n} \rtimes e_n$.
Again $B_n \cong \C^{n+2}$.
But
$K_0(\varepsilon(E) \rtimes G) =\bigoplus_\N \Z$
as the projection
$\varepsilon_1 \rtimes (1+S)/2$ is orthogonal
to all projections $\varepsilon_{e_n} \rtimes e_n$.

We are now coming to the most important point, namely that it appears not possible to construct a non-degenerate $C_0(X)$-compatible $C_0(X)$-valued $L^2(G)$-module.
Somehow we should have some sort of generators $\delta_1,\delta_S,\delta_{e_1}
,\ldots,\delta_{e_n},\ldots$
of the module.
The $G$-action should be $g(\delta_h)
= \delta_{g h}$ to be regarded as an $L^2(G)$-module. 
By compatibility of the module product we naturally have $\delta_g e = \delta_g \cdot e(1)=  e(\delta_g) \cdot 1 = \delta_{e g}$ 
for $e \in E \subseteq C_0(X)$.
Naturally we should choose $\langle \delta_{e_n}, \delta_{e_n} \rangle =e_n$
for the inner product. By compatibility
of the inner product
we have $\langle \delta_S , \delta_S\rangle e_n =
\langle \delta_S e_n , \delta_S e_n \rangle = e_n$
for all $n\ge 1$.
Consequently $C_0(X) \ni \langle \delta_S , \delta_S\rangle = 1$ 
(because the carriers of the elements $e \in E$ generate the topology of $X$)
and similarly $\langle \delta_S , \delta_1\rangle = \langle \delta_1 , \delta_1\rangle = 1$.
But then $\| \delta_1-\delta_S\| = 0$ and the module
degenerates.

\if 0
NUR STIMMT DAS SCHON? denn wie sollte denn die $\ell^2(X)$-module multiplication aussehen?:

Let us discuss some other modules.
If we used the formulas of Definition \ref{defCompatibleL2} we have for $E$-continuous inverse semigroups then we would get a non-degenerate $\ell^\infty(X)$-valued
$L^2(G)$-module. We would have $\langle \delta_p, \delta_p\rangle = p$ for the projections $p$ in $G$, and $\langle \delta_S,\delta_1 \rangle = \langle \delta_S,\delta_S \rangle = 1 - \varepsilon_1$.
It would also be a full module if we restricted the coefficients to the $G$-invariant $C^*$-algebra $C_0(X) + \C \varepsilon_1$.
\fi

Let us discuss another module.
We may construct the non-degenerate $\varepsilon(E)$-valued $L^2(G)$-module of \cite[Def. 5.5]{burgNoteBC}.
The generators are the characteristic functions $\delta_g:G \rightarrow \C$
with $\delta_g(h)=1_{\{g=h\}}$ for $g,h \in G$. The $G$-action is given by $h(\delta_g)= 1_{\{h^* h \ge g g^*\}} \delta_{h g}$. 
The inner product is determined by $\langle \delta_g,\delta_h\rangle  = 1_{\{g=h\}}$, $\langle \delta_p, \delta_p \rangle = \varepsilon_p$ for the projections $p$ in $G$, and $\langle \delta_S , \delta_S \rangle = \varepsilon_1$.
The module product computes as $\delta_p \varepsilon_q = 1_{\{p=q\}}$ for projections $p,q$, and $\delta_S \varepsilon_1 = \delta_S$ and $\delta_S \varepsilon_{e_n} = 0$.
}
\end{example}

\begin{example}[Dense $E$-discontinuity example]
{\rm
In Example \ref{example1} we had some kind of $E$-discontinuity only at $S$ (or we may say at $1$). We may construct
such an $E$-discontinuity at every $e$ in $E$ by the same method.
Start with a given inverse semigroup $G=E$ consisting only of projections.
Adjoin to $G$ for every $e$ in $E$ a symmetry
$S_e$ such that $S_e f = f S_e = f$ for all $f < e$.
Other relations we do not add.
The resulting inverse semigroup $G$
is $E$-discontinuous in those $S_e$
in the sense that $\bigvee_{f \in E, f \le S_e}
f$ is discontinuous where $e$ has no precursor
$f < e$. If no element of $E$ has a precursor
in $E$ then the $E$-discontinuity points
are dense in $X$ (at the points 
$\epsilon_e$ we may say, which form a dense subset of $X$).
}
\end{example}

\begin{example}[{Finitely presented $E$-discontinuous inverse semigroup}]
{\rm
A finitely presented $E$-discontinuous
inverse semigroup may be defined
as follows.
%
Consider the finitely presented inverse semigroup
$$G=\langle t,l,e \,| \,tl = lt,\,t^* l = l t^*,\, t e =e, \, t^* e = e \rangle.$$
That is $t$ and $t^*$ commute with $l$ and $l^*$, and $e$ absorbs $t$ and $t^*$.

\if 0
Consider the finitely presented semigroup
$(t,l,e \,| \,tl = lt,\, t e =e)$.
Let $G$ its associated finitely presented
inverse semigroup; that is add adjoints
$t^*,l^*,e^*$ and divide out all necessary additional
relations to turn it to an inverse semigroup.
\fi

Between $l$ and $e$ we have no relations,
they are free in $G$,
so that we get
infinitely many distinct projections
$$p_0:=e, \; p_1:= lee^* l^*, \; p_2:=llee^* l^* l^*,
\ldots,\; p_n := l^{n} e e^* {l^*}^{n},\ldots$$
in $G$.
Now
$t p_n = p_n$ by the defining relations of $G$.
The projections $p_n$ cannot be compared among each other, i.e. $p_n \le p_m$ implies $n=m$.
Hence the criterion for $E$-continuity of
Lemma \ref{lemmaEcontinuity} fails for $t$,
as the supremum of $\{e \in E|\, e\le t\}$ 
will
not be attained at a finite set of projections
of $E$. 
To see this, let us first note that
we have no single projection $q \in E$
such that $t \ge q \ge p_0 , p_1, p_2, \ldots$.
Indeed, every such projection $q$ would require to include the letter $e$ to obtain $t \ge q$,
and consequently any letter $t$ or $t^*$ in $q$ would be absorbed by $e$. So $q$ 
would allow a presentation with letters $l$ and $e$ and their adjoints only, and such a $q \ge  p_1,p_2 ,p_3, \ldots$
as required does not exist.
One can similarly argue that we also cannot choose $q_1,\ldots,q_n \in E$ such that
$t \ge q_1 \vee \ldots \vee q_n \ge p_0,p_1,p_2,\ldots$.
So by Lemma \ref{lemmaEcontinuity} we 
get that $G$ is {\em not} $E$-continuous.
%


%
%
}
\end{example}

\begin{remark}[Baum--Connes map for inverse semigroups] 
{\rm
In \cite{burgAttempts} we have tried to define a Baum--Connes map for inverse semigroup crossed products parallel to the method
of Meyer and Nest in \cite{meyernest2006} for group crossed products, which automatically would include some theoretical
method to compute the left hand side of the Baum--Connes map. On that way, $C_0(X)$-compatible Hilbert modules and their $KK$-theory
appeared the better choice than the corresponding, $C_0(X)$-structure ignorring incompatible tools.
Thus $C_0(X)$ is the natural coefficient algebra. But since $L^2(G)$-spaces are in the center and the core
of any 
Baum--Connes theory,
and Example \ref{example1} shows that a compatible $C_0(X)$-valued $L^2(G)$-module
requires $E$-continuity of $G$, it appears not possible to overcome the $E$-discontinuity barrier when defining a 
Baum--Connes map, at least not with the known (group) $L^2(G)$-space methods. 
That is, as soon as the associated groupoid of $G$ is non-Hausdorff the method fails.
More generally, Tu \cite{tunonhausdorffgroupoid} has tried to develop a Baum--Connes theory for non-Hausdorff groupoids,
and came to the same conclusion that for non-Hausdorff groupoids the known methods fail,
even one may be able to formally write down the Baum--Connes map also for non-Hausdorff groupoids.
}
\end{remark}

\begin{remark}[Baum--Connes theory for discreticized crossed products]
{\rm
Whereas we have no approach to handle the $K$-theory of a crossed product $A \rtimes G$ for an inverse semigroup $G$,
we have a Baum--Connes map and additionally at least theoretically an approach to treat the $K$-theory of $(\varepsilon(E) \otimes_{C_0(X)} A)\rtimes G$ by \cite{burgNoteBC}.
Even though 
the $K$-theories of the latter two crossed products are obviously different in general, they might have some aspects in common 
in certain good interesting cases
as the latter two
crossed products are also similar.
For example, if $G=E$ consists only of projections then both $C_0(X) \rtimes E$ and $\varepsilon(E) \rtimes E$ are 
the direct limit of canonically $*$-isomorphic finite dimensional sub-$C^*$-algebras. 
Only the direct limit
embedding maps are different in both cases. 
Hence their $K$-theories are still similar. For example, in both cases the
$K_0$-groups
are {\em infinitely generated} and the $K_1$ groups are zero if $E$ is infinite.
That is, being infinitely generated is a common quality of the $K$-theory of both crossed products.
}
\end{remark}

\begin{example}
{\rm
That being said, let us remark that the discretized crossed product and
the usual crossed product may however also be rather distinct.
Write for example the Cuntz algebra $\calo_n$ as the inverse semigroup crossed
product $\calo_n \cong A \rtimes G$ (Sieben's crossed product, which is the universal crossed product subject to the relations $e(a) \rtimes g \equiv a \rtimes e g$ for all $a \in A, e \in E, g \in G$), where
$G$ is defined to be the inverse semigroup $G \subseteq \calo_n$ generated by the standard generators
$S_1, \ldots, S_n$ of the Cuntz algebra, and $A \subseteq \calo_n$ denotes the
smallest $G$-invariant $C^*$-subalgebra of the Cuntz algebra generated by the identity
$1 \in \calo_n$ under the (incompatible) $G$-action $g(a) = g a g^*$ for $a \in \calo_n, g \in G$.
Note that $A$ is the commutative $G$-algebra (in the sense of Definition \ref{defgalgebra})
generated by the elements of the form $g g^*$ for $g \in G$.
The isomorphism is $\varphi:\calo_n \rightarrow A \rtimes G: \varphi(S_i)= 1 \rtimes S_i$.
Then we have that 
$$0 = (\varepsilon(E) \otimes_{C_0(X)} A)\rtimes G  \neq A \rtimes G = \calo_n,$$
because in the left hand sided crossed product we have
$$(\varepsilon_1 \otimes 1) \rtimes 1 = \big(\varepsilon_1 \otimes (S_1 S_1^* + \ldots + S_n S_n^*) \big ) \rtimes 1 = 0$$
as $S_i S_i^*(\varepsilon_1) = 0$ (action of $S_i S_i^*$ on $\varepsilon_1$) for all $i$, and by similar reasoning
$(\varepsilon_e \otimes 1) \rtimes 1 = 0$ for all $e \in E$.
We see thus that the discretized crossed prodcut is not an approximation
of the crossed product $A \rtimes G$ at all
as it collapses to zero. (As already the discretized coefficient algebra $\varepsilon(E) \otimes_{C_0(X)} A$ is zero). Still the $K$-theory of both crossed products
is finitely generated. But this need not be in general true, as we may
replace $A$ by an infinite sum of copies of $A$, and so $K_0((\bigoplus_\N A ) \rtimes
G) = \bigoplus_\N K_0(A  \rtimes
G)$ is infinitely generated whereas the $K$-theory of the discretized crossed product
is an infinite sum of zeros, so zero and thus finitely generated. 
}
\end{example}

\if 0
\begin{example}
Example of non-Hausdorff. Set $G=\{S,1\} \cup \{e_n| n \ge 1\}$ where
$S$ symmetry ($s s =1$), and $e_1 \le e_2 \le \ldots$ and $s e_n= e_n s = e_n$.
For example $S= s \oplus e$ with $s$ symmetry and $e_i < e$.

Then $G$ non-Hausdorff. For example not $E$-continuous: $e_n \le s$ for all $n$. But no maximal $e$.

$K$-theory: $C^*(G) = \lim C^*(G_n:=\{S,e_1 \ldots,e_n))$. Its $K$-theory is $K_0(G_n)= \Z^{n+2}$.
Note that $C^*(S)$ is commutative $\C^2$. Its nonzero, nonunital projections are $(s \pm 1)/2)$.

The maximal projection $e:= (s+1)/2$ will be split up with every next appearing $e_n$. The projection
$(s-1)/2$ remains isolated and untouched.

(jedoch bei $\varepsilon(E) \rtimes G$
bleibt die Porjektion $(\varepsilon_1 S + \varepsilon_1 1)/2$ ebenfalss untouched, 
da diese Projketion orthognoal zu den
$\varepsilon_{e_n} e_n$ ist.

Damit $K_0(\varepsilon(E) \rtimes G) \cong K_0 C_0(\N)$.)

$K_0(C^*(G))= \oplus_{n \ge 1} \Z \cup \{1\}$.


Interessant dass wenn man $e_{\max}$ adden würde, so wäre es different von $C^*(G)$, da ja $e_{\max}$
linear unabhägig zu $e=(s+1)/2$, in $C^*(G \cup \{e_{max}\})$.

bemerke ill-posdness of $\langle S,q\rangle$. Was soll es sein?  $\langle S,q\rangle e_n =  \langle e_n, e_n\rangle = e_n$.
 $\langle S,q\rangle =1$ ?   Hier $q=1$.
Damit $\langle S -q,S-q\rangle =0$.
Denn $\langle S,S \rangle = 1 = \langle q ,q \rangle $. Denn $\langle S,S\rangle = e_n$
geht not. (wegen $\langle S,S\rangle e_{n+1}
= e_{n+1}$ wid)

Interesant wie es auch in "Notes on a certain BC" nich ginge.
dort $<\delta_S,\delta_1>=0$. Und $<\delta_S +\sum_n \delta_{e_n},
\delta_1 +\sum_n \delta_{e_n}> = \sum_n \delta_{e_n}$ nicht stetig,
nicht in $C_0(X)$.
\end{example}
\fi

\if 0
\begin{example}
Unstetigkitsstellen dicht in X?
Jedenfalls könnte man mit der
Methode Hp aufbauen.

SO: nehme zu jedem e in E, (das kein nächtkleineres element hat), eine symmetrie
hinzu $S$, sodass $S f = f S= f$ für alle
kleineren projektionen $f < e$,
aber $S e \neq e$. Damit nicht E-stetig
mit unstetigkeitstelle in $\epsilon_e$.
(methode wie oben)
aber was multiplikation von $S$ mit
anderen elementen von G?
jedenfalls vllt konret auf Hilbert raum realisierbar nur mit
gegebenem $E$ und dann adjungierten solchen S.

\end{example}

\begin{example}
$\ell^\infty(X) \otimes^{C_0(X)} C^*(G)$

$\ell^\infty(X) \rtimes G$

zu obigem beispiel: wenn man $e_{max}=e_m$ added, zu $G_m=G \cup\{e_m\}$.
Dann $\C \rtimes G_m$: $\sum_n \alpha_n e_n + \alpha_m e_m + \alpha_S S + \alpha_1 1$.

In $C_0(X)_m:= C_0(X) \cup {e_m} \subseteq \ell^\infty(X)$.
Hätte man $C_(X)_m \widehat \rtimes G$:

$\sum \alpha_n e_n \rtimes g + \alpha_m e_m \rtimes g + 1 \rtimes g$

$= \sum \alpha g + \alpha_m e_m$ (für $g=1$ und $g=S$).

also beide gleich.

Bei $\ell^\infty(G)$:

$(a \rtimes g )(b \rtimes h) = a \cdot g(b) \rtimes g h$

\end{example}

\begin{example}
Finitely generated inverse semigroup, groupoid non HD.

\end{example}

\fi

\if 0

\section{group $KK$}

Definiere gruppen-kk indem ich alle (topolog) gruppen erlaube,
wobei, falls 

Definiere kk, wie fuer $c^*$-algebren, nur dass man (gleichmaessig) stetige
gruppenhomomorphismen als morhismen nimmt

Wenn gruppe $G= GL(M)$ von module $M$, dann lasse $GL(M) \rightarrow GL(M \oplus M)$ (Matrix $M_2$ sozusagen)
invertierbar.
-
Auch spezialisierung auf nur wenn $M$ ist $C^*$-algebra.

funktionen $GL(X) \rightarrow GL(X \times X)$  (bijektive funktionen)

\begin{definition}
Define Group KK the universal additive category with generators

$\hom(G,H)$ group homomorphisms between groups $G$ and $H$,

synthetical inverse $f^{-1}:B(X \times X) \rightarrow B(X)$
for every corner embedding $f:B(X) \rightarrow B(X \times X)$,
where $B(Y)$ = set of bijective Maps on a set $Y$,

synthetical map $p^\bot:G \rightarrow \ker(p)$ for every
idempotent endomorphism $p \in \hom(G,G)$
with relations
\begin{equation}    \label{e2}
i p^\bot = \mbox{id}_{\ker(p)}, \qquad p^\bot i + p = \mbox{id}_G
\end{equation}
for the inclusion map $i:\ker(p) \rightarrow G$.

\end{definition}

Jedoch so keine homotopy rotation fuer eckeneinbettung.

also doch module.

Lemma \ref{lemma1} funktioniert dann nur wenn $G=GL(V)$ mit $V$ vektorraum.
Es ist eher $G=GL(End(V))$. Damit doch $A=End(V)$.

man beachte auch den unterschied zu $G=GL(A)$ mit $A$ algebra. ist dann doch etwas
noch anderes.

nehme einfach: regel gilt , wenn $G=GL(R)$ mit $R$ ring.

$$KK(G,H) \supseteq KK(G,A) KK(A,B) KK(B,H)$$

was ist überhaupt mit homotopy? geht nicht ohne topology, dh. in $GL(R)$.

\section{}

homotopy $GL(A) \sim U(A)$. könnte unitären gruppen homomorphismen $U(A) \rightarrow B$ zu $GL(A) \rightarrow B$ liften?

aber nur bis auf homotopy, damit nur in $GL/GL_0$.
kein gruppenhomomorphismus auf $GL$.

\section{}

$G \rightarrow G \times U$

aber im prinzip ist corner embedding kein gruppen homomorphismus

geh auf suspension

\fi

\bibliographystyle{plain}
\bibliography{references}

\begin{thebibliography}{10}

\bibitem{burgNoteBC}
B.~{Burgstaller}.
\newblock {A note on a certain Baum--Connes map for inverse semigroups.}
\newblock preprint. arXiv:1609.01913.

\bibitem{burgAttempts}
B.~Burgstaller.
\newblock {Attempts to define a Baum--Connes map for inverse semigroups via
  localization of categories}.
\newblock {preprint. arXiv:1506.08412}.

\bibitem{burg_generators_KK}
B.~Burgstaller.
\newblock {The generators and relations picture of $KK$-theory}.
\newblock {preprint. arXiv:1602.03034v2}.

\bibitem{burgUniversalKK}
B.~Burgstaller.
\newblock {The universal property of inverse semigroup equivariant
  $KK$-theory}.
\newblock {preprint. arXiv:1405.1613v2}.

\bibitem{burgmatrix}
B.~Burgstaller.
\newblock {Semigroup homomorphisms on matrix algebras.}
\newblock {\em {Adv. Operat. Th.}}, 2(3):287--292, 2017.

\bibitem{cuntz1984}
J.~{Cuntz}.
\newblock {K-theory and $C\sp*$-algebras.}
\newblock {Algebraic K-theory, number theory, geometry and analysis, Proc. int.
  Conf., Bielefeld/Ger. 1982., Lect. Notes Math. 1046, 55-79 (1984).}, 1984.

\bibitem{cuntzpicture}
J.~{Cuntz}.
\newblock {A new look at KK-theory.}
\newblock {\em {$K$-Theory}}, 1(1):31--51, 1987.

\bibitem{higson}
N.~{Higson}.
\newblock {A characterization of KK-theory.}
\newblock {\em {Pac. J. Math.}}, 126(2):253--276, 1987.

\bibitem{kasparov1988}
G.G. Kasparov.
\newblock {Equivariant KK-theory and the Novikov conjecture.}
\newblock {\em Invent. Math.}, 91(1):147--201, 1988.

\bibitem{khoshkamskandalisRegular}
M.~{Khoshkam} and G.~{Skandalis}.
\newblock {Regular representation of groupoid $C^*$-algebras and applications
  to inverse semigroups.}
\newblock {\em {J. Reine Angew. Math.}}, 546:47--72, 2002.

\bibitem{meyernest2006}
R.~{Meyer} and R.~{Nest}.
\newblock {The Baum-Connes conjecture via localisation of categories.}
\newblock {\em {Topology}}, 45(2):209--259, 2006.

\bibitem{molnar}
L.~{Moln\'ar}.
\newblock {*-semigroup endomorphisms of $B(H)$.}
\newblock In {\em {Recent advances in operator theory and related topics. The
  B\'ela Sz\H{o}kefalvi-Nagy memorial volume. Proceedings of the memorial
  conference, Szeged, Hungary, August 2--6, 1999}}, pages 465--472. Basel:
  Birkh\"auser, 2001.

\bibitem{paterson}
A.L.T. {Paterson}.
\newblock {\em {Groupoids, inverse semigroups, and their operator algebras.}},
  volume 170.
\newblock Boston, MA: Birkh\"auser, 1999.

\bibitem{thomsen}
K.~{Thomsen}.
\newblock {The universal property of equivariant $KK$-theory.}
\newblock {\em {J. Reine Angew. Math.}}, 504:55--71, 1998.

\bibitem{tunonhausdorffgroupoid}
J.L. {Tu}.
\newblock {Non-Hausdorff groupoids, proper actions and $K$-theory.}
\newblock {\em {Doc. Math.}}, 9:565--597, 2004.

\end{thebibliography}

\end{document}